\documentclass{amsart}
\usepackage{amsfonts}

\setcounter{MaxMatrixCols}{10}

\newtheorem{theorem}{Theorem}
\theoremstyle{plain}

\newtheorem{corollary}{Corollary}

\newtheorem{lemma}{Lemma}

\numberwithin{equation}{section}
\input{tcilatex}

\begin{document}
\title{Density functions in high-dimensional basket options}
\author{Alexander Kushpel}
\address{Department of Mathematics, University of Leicester, University
Road, Leicester, LE1 7RH}
\email{ak412@le.ac.uk}
\subjclass[2010]{91G20, 60G51, 91G60, 91G80.}
\keywords{L\'{e}vy-driven models, Fourier transform, Wiener spaces,
reconstruction.}
\date{August 2013.}

\begin{abstract}
We consider an important class of derivative contracts written on multiple
assets (so-called spread options) which are traded on a wide range of
financial markets. The present paper introduces a new approximation method
of density functions arising in high-dimensional basket options which is
based on applications of generalised Nyquist-Whitakker-Kotel'nikov-Shannon
theorem we established. It is shown that the method of approximation we
propose has an exponential rate of convergence in various situations.
\end{abstract}

\maketitle

\section{introduction}

\label{sec1}

Consider a frictionless market with no arbitrage opportunities with a
constant riskless interest rate $r>0$. Let $S_{1,t}$ and $S_{2,t},t\geq 0$,
be two asset price processes. Consider a European call option on the price
spread $S_{1,T}-S_{2,T}$. The common spread option with maturity $T>0$ and
strike $K\geq 0$ is the contract that pays $\left( S_{1,T}-S_{2,T}-K\right)
_{+}$ at time $T$, where $\left( a\right) _{+}:=\max \left\{ a,0\right\} $.
There is a wide range of such options traded across different sectors of
financial markets. For instance, the crack spread and crush spread options
in the commodity markets \cite{Mbanefo}, \cite{Shimko}, credit spread
options in the fixed income markets, index spread options in the equity
markets \cite{Duan} and the spark (fuel/electricity) spread options in the
energy markets \cite{Deng}, \cite{Pilipovic}.

Assuming the existence of a risk-neutral equivalent martingale measure we
get the following pricing formula for the call value at time $0$.%
\begin{equation}
V=V\left( S_{1,0},S_{2,0},T\right) =e^{-rT}\mathbb{E}\left[ \left(
S_{1,T}-S_{2,T}-K\right) _{+}\right] ,  \label{vvv}
\end{equation}%
where the expectation is taken with respect to the equivalent martingale
measure. There is an extensive literature on spread options and their
applications. In particular, if $K=0$ a spread option is the same as an
option to exchange one asset for another. An explicit solution in this case
has been obtained by Margrabe \cite{Margrabe}. Margrabe's model assumes that 
$S_{1,t}$ and $S_{2,t}$ follow a geometric Brownian motion whose
volatilities $\sigma _{1}$ and $\sigma _{2}$ do not need to be constant, but
the volatility $\sigma $ of $S_{1,t}/S_{2,t}$ is a constant, $\sigma =\left(
\sigma _{1}^{2}+\sigma _{2}^{2}-2\sigma _{1}\sigma _{2}\rho \right) ,$ where 
$\rho $ is the correlation coefficient of the Brownian motions $S_{1,t}$ and 
$S_{2,t}$. Margrabe's formula states that

\begin{equation*}
V=e^{-q_{1}T}S_{1,0}N\left( d_{1}\right) -e^{-q_{2}T}S_{2,0}N\left(
d_{2}\right) ,
\end{equation*}%
where $N$ denotes the cumulative distribution for a standard Normal
distribution, 
\begin{equation*}
d_{1}=\frac{1}{\sigma T^{1/2}}\left( \ln \left( \frac{S_{1,0}}{S_{2,0}}%
\right) +\left( q_{1}-q_{2}+\frac{\sigma }{2}\right) T\right)
\end{equation*}%
and $d_{2}=d_{1}-\sigma T^{1/2}$.

Unfortunately, in the case where $K>0$ and $S_{1,t}$, $S_{2,t}$ are
geometric Brownian motions, no explicit pricing formula is known. In this
case various approximation methods have been developed. There are three main
approaches: Monte Carlo techniques which are most convenient for
high-dimensional situation because the convergence is independent of the
dimension, fast Fourier transform methods studied in \cite{Carr and Madan}
and PDEs. Observe that PDE based methods are suitable if the dimension of
the PDE is low (see, e.g. \cite{Jitse Niesen}, \cite{Duffy}, \cite{Tavella}
and \cite{Wilmott} for more information). The usual PDE's approach is based
on numerical approximation resulting in a large system of ordinary
differential equations which can then be solved numerically.

Approximation formulas usually allow quick calculations. In particular, a
popular among practitioners Kirk formula \cite{Kirk} gives a good
approximation to the spread call (see also Carmona-Durrleman procedure \cite%
{carmona}, \cite{Li Deng Zhou}). Various applications of fast Fourier
transform have been considered in \cite{Dempster and Hong} and \cite{Lord}.

It is well-known that Merton-Black-Scholes theory becomes much more
efficient if additional stochastic factors are introduced. Consequently, it
is important to consider a wider family of L\'{e}vy processes. Stable L\'{e}%
vy processes have been used first in this context by Mandelbrot \cite{man1}
and Fama \cite{f1}.

From the 90th L\'{e}vy processes became very popular (see, e.g., \cite{ms1}, 
\cite{ms2}, \cite{bp1}, \cite{bl1} and references therein). Usually the
reward function has a simple structure, hence the main problem in
computation of integral (\ref{vvv}) is to approximate well the respective
density function. In the present article we develop a general method of
approximation of density functions. This method is saturation free and can
be applied in high-dimensional situation.

\section{\protect\bigskip theoretical background}

\label{sec4}

Let $C(\mathbb{R}^{n})$ be the space of continuous functions on $\mathbb{R}%
^{n}$ and $L_{p}(\mathbb{R}^{n})$ be the usual space of $p$-integrable
functions equipped with the norm 
\begin{equation*}
\Vert f\Vert _{p}=\Vert f\Vert _{L_{p}(\mathbb{R}^{n})}:=\left\{ 
\begin{array}{cc}
\left( \int_{\mathbb{R}^{n}}\left\vert f(\mathbf{x})\right\vert ^{p}d\mathbf{%
x}\right) ^{1/p}, & 1\leq p<\infty , \\ 
\mathrm{ess}\,\,\sup_{\mathbf{x}\in \mathbb{R}^{n}}|f(\mathbf{x})|, & 
p=\infty .%
\end{array}%
\right.
\end{equation*}%
Let $\mathbf{x},\mathbf{y}\in \mathbb{R}^{n}$, $\mathbf{x=}\left(
x_{1},...,x_{n}\right) ,\mathbf{y=}\left( y_{1},...,y_{n}\right) $, and $%
\left\langle \mathbf{x},\mathbf{y}\right\rangle $ be the usual scalar
product in $\mathbb{R}^{n}$, i.e., 
\begin{equation*}
\left\langle \mathbf{x},\mathbf{y}\right\rangle =\sum_{k=1}^{n}x_{k}y_{k}\in 
\mathbb{R}.
\end{equation*}%
For an integrable on $\mathbb{R}^{n}$ function, i.e., $f(\mathbf{x})\in
L_{1}\left( \mathbb{R}^{n}\right) $ define its \textit{Fourier transform} 
\begin{equation*}
\mathbf{F}f(\mathbf{y})=\int_{\mathbb{R}^{n}}\exp \left( -i\left\langle 
\mathbf{x,y}\right\rangle \right) f(\mathbf{x})d\mathbf{x}.
\end{equation*}%
and its formal inverse as%
\begin{equation*}
\left( \mathbf{F}^{-1}f\right) (\mathbf{x})=\frac{1}{\left( 2\pi \right) ^{n}%
}\int_{\mathbb{R}^{n}}\exp \left( i\left\langle \mathbf{x,y}\right\rangle
\right) f(\mathbf{y})d\mathbf{y}.
\end{equation*}%
Remark that in the periodic case the most natural (and in many important
cases optimal in the sense of the respective $n$-widths) method to
approximate sets of smooth functions is to use trigonometric approximations.

In the case of approximation on the whole real line $\mathbb{R}$ the role of
subspaces of trigonometric polynomials play functions from the \textit{%
Wiener spaces} $W_{\sigma }(\mathbb{R})$, i.e., \textit{entire functions}
from $L_{2}(\mathbb{R})$ whose Fourier transform has support $[-\sigma
,\sigma ]$. Such functions have an \textit{exponential type} $\sigma >0$.
Remind that an entire function $f(z)$ defined on the complex plane $\mathbb{C%
}$ can be represented as%
\begin{equation*}
f(z)=\sum_{k=0}^{\infty }c_{k}z^{k}
\end{equation*}%
for any $z\in \mathbb{C}.$ Assume that $f(z)$ has such coefficients $c_{k}$
that 
\begin{equation*}
\overline{\lim }_{k\rightarrow \infty }\left( k!\left\vert c_{k}\right\vert
\right) ^{1/k}=\sigma <\infty .
\end{equation*}%
Then for some constant $M>0$ we have 
\begin{equation*}
\left\vert f\left( z\right) \right\vert \leq \sum_{k=0}^{\infty }\left\vert
c_{k}\right\vert \left\vert z\right\vert ^{k}=\sum_{k=0}^{\infty }\frac{1}{k!%
}\left( \left\vert z\right\vert \left( k!\left\vert c_{k}\right\vert \right)
^{1/k}\right) ^{k}
\end{equation*}%
\begin{equation*}
\leq M\sum_{k=0}^{\infty }\frac{1}{k!}\left( \left\vert \sigma z\right\vert
\right) ^{k}=Me^{\sigma \left\vert z\right\vert }.
\end{equation*}%
We say that a function $f(z)$ defined on the complex plane $\mathbb{C}$ is
of exponential type $\sigma >0$ if there exists a constant $M$ such that for
any $\theta \in \lbrack 0,2\pi )$, 
\begin{equation}
\left\vert f\left( z\right) \right\vert \leq Me^{\sigma
r},\,\,\,z=re^{i\theta }  \label{type}
\end{equation}%
in the limit of $r\rightarrow \infty $. The key role here plays the
classical Paley-Wiener theorem which relates decay properties of a function
at infinity with analyticity of its Fourier transform. It makes use of the
holomorphic Fourier transform defined on the space of square-integrable
functions on $\mathbb{R}$.

\begin{theorem}
\bigskip (Paley-Wiener)\label{t0} Suppose that $F$ is supported in $[-\sigma
,\sigma ]$, so that $F\in L_{2}[-\sigma ,\sigma ]$. Then the holomorphic
Fourier transform 
\begin{equation*}
f(z)=\int_{[-\sigma ,\sigma ]}F(\xi )e^{-iz\xi }d\xi
\end{equation*}%
is an entire function of exponential type $\sigma $ as defined in (\ref{type}%
).
\end{theorem}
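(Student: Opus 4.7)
The plan is to prove the two claims of the theorem separately: first that the holomorphic Fourier transform $f(z)$ extends to an entire function, and second that its growth is controlled by $Me^{\sigma r}$.

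For entirety, I would proceed by expanding the kernel $e^{-iz\xi}$ in its absolutely convergent power series in $z$ and integrating term by term. Since $F \in L_2[-\sigma,\sigma]$ is supported on a compact set, Cauchy--Schwarz gives $F \in L_1[-\sigma,\sigma]$ with $\|F\|_1 \leq (2\sigma)^{1/2}\|F\|_2$. Writing
\begin{equation*}
f(z) = \sum_{k=0}^{\infty} \frac{(-iz)^k}{k!} \int_{-\sigma}^{\sigma} \xi^k F(\xi)\,d\xi = \sum_{k=0}^{\infty} c_k z^k,
\end{equation*}
the interchange is justified by dominated convergence applied to the partial sums, since $\sum |z\xi|^k/k! = e^{|z\xi|} \leq e^{\sigma|z|}$ uniformly on $[-\sigma,\sigma]$. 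This exhibits $f$ explicitly as an everywhere-convergent power series, so $f$ is entire. An alternative is Morera's theorem applied after Fubini on a triangular contour, but the power series route has the bonus of producing coefficients whose growth I can check directly against the criterion $\overline{\lim}_{k\to\infty}(k!|c_k|)^{1/k}\leq \sigma$ discussed earlier in the excerpt.

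For the exponential type bound, I would estimate $|f(z)|$ pointwise. Writing $z = re^{i\theta}$, a direct computation gives
\begin{equation*}
-iz\xi = r\xi \sin\theta - ir\xi\cos\theta,
\end{equation*}
so $|e^{-iz\xi}| = e^{r\xi \sin\theta} \leq e^{r\sigma}$ for all $\xi \in [-\sigma,\sigma]$ and all $\theta\in[0,2\pi)$. Applying Cauchy--Schwarz to the defining integral yields
\begin{equation*}
|f(z)| \leq e^{\sigma r} \int_{-\sigma}^{\sigma} |F(\xi)|\,d\xi \leq (2\sigma)^{1/2}\|F\|_2\, e^{\sigma r},
\end{equation*}
which is precisely the bound (\ref{type}) with $M = (2\sigma)^{1/2}\|F\|_2$.

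Neither step presents a genuine obstacle; the only thing worth being careful about is the justification of term-by-term integration in the power series argument, and the sign computation that produces $r\xi\sin\theta$ as the real part of $-iz\xi$ (an error here would give a worthless bound like $e^{r\sigma|\cos\theta|}$ in the wrong variable). The cleanest presentation is to state the series expansion to obtain entirety and the pointwise majorization to obtain the exponential type, then remark that the two arguments are compatible via the growth estimate $|c_k| \leq \sigma^k (2\sigma)^{1/2}\|F\|_2/k!$ on the coefficients.
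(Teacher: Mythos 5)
Your proof is correct. Note that the paper itself offers no proof of this statement -- it is quoted as the classical Paley--Wiener theorem -- so there is nothing to compare against line by line; what the paper does provide, immediately before the theorem, is the computation showing that a power series with $\overline{\lim}_{k\rightarrow\infty}\left(k!\left\vert c_{k}\right\vert\right)^{1/k}=\sigma<\infty$ satisfies the type bound (\ref{type}), and your coefficient estimate $\left\vert c_{k}\right\vert \leq \sigma^{k}(2\sigma)^{1/2}\Vert F\Vert_{2}/k!$ plugs directly into that criterion, so your argument is the one most consistent with the surrounding text. Both halves of your argument are sound: the term-by-term integration is justified exactly as you say (the partial sums are dominated by $\vert F(\xi)\vert e^{\sigma\vert z\vert}\in L_{1}[-\sigma,\sigma]$ since $F\in L_{1}$ by Cauchy--Schwarz on a compact interval), and the sign computation $\func{Re}(-iz\xi)=r\xi\sin\theta\leq\sigma r$ gives the pointwise majorization with $M=(2\sigma)^{1/2}\Vert F\Vert_{2}$.
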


Remark that entire functions of exponential type $\sigma $ as an apparatus
of approximation was first considered by Bernstein \cite{bern}.

In the $n$-dimensional settings we use entire functions $f\left( \mathbf{z}%
\right) :\mathbb{C}^{n}\longrightarrow \mathbb{C}$ of $n$ variables $\mathbf{%
z}=\left( z_{1,\cdot \cdot \cdot ,}z_{n}\right) \in \mathbb{C}^{n}$ which
satisfy the condition%
\begin{equation*}
\left\vert f\left( \mathbf{z}\right) \right\vert \leq M\exp \left(
\sum_{k=1}^{n}\sigma _{k}\left\vert z_{k}\right\vert \right) ,\forall 
\mathbf{z\in }\mathbb{C}^{n},
\end{equation*}%
where $M$ is a fixed constant. Here $\mathbf{\sigma :}=\left( \sigma
_{1},\cdot \cdot \cdot ,\sigma _{n}\right) $ is the exponential type of $%
f\left( \mathbf{z}\right) .$ To justify an inversion formula we will need
Planchrel's theorem (see, e.g., \cite{plancherel}).

\begin{theorem}
(Plancherel) The Fourier transform is a linear continuous operator from $%
L_{2}\left( \mathbb{R}^{n}\right) $ onto $L_{2}\left( \mathbb{R}^{n}\right)
. $ The inverse Fourier transform, $\mathbf{F}^{-1},$ can be obtained by
letting 
\begin{equation*}
\left( \mathbf{F}^{-1}g\right) \left( \mathbf{x}\right) =\frac{1}{(2\pi )^{n}%
}\left( \mathbf{F}g\right) \left( -\mathbf{x}\right)
\end{equation*}
for any $g\in L_{2}\left( \mathbb{R}^{n}\right) .$
\end{theorem}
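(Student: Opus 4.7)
The plan is to reduce to a dense, well-behaved subspace of $L_{2}(\mathbb{R}^{n})$, prove a norm identity there, and then extend by continuity, using the inversion formula for surjectivity. I would work on the Schwartz space $\mathcal{S}(\mathbb{R}^{n})$ of rapidly decreasing smooth functions, which is dense in $L_{2}(\mathbb{R}^{n})$. On this space the defining integral for $\mathbf{F}$ converges absolutely and $\mathbf{F}$ maps $\mathcal{S}(\mathbb{R}^{n})$ into itself (because differentiation in $\mathbf{x}$ corresponds to multiplication by $i\mathbf{y}$ under $\mathbf{F}$, and vice versa).

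The first substantive step is to establish the pointwise inversion formula on $\mathcal{S}(\mathbb{R}^{n})$, i.e., $f = \mathbf{F}^{-1}\mathbf{F}f$. Directly interchanging the two iterated integrals is not legal, so I would insert a Gaussian convergence factor $e^{-\varepsilon \lvert \mathbf{y}\rvert^{2}}$, apply Fubini's theorem to the resulting absolutely convergent double integral, use the explicit Fourier transform of the Gaussian to evaluate the inner integral as a scaled Gaussian in $\mathbf{x}$, and then let $\varepsilon \downarrow 0$ via an approximation-to-the-identity argument. With inversion in hand, the Parseval relation $\langle \mathbf{F}f,\mathbf{F}g\rangle_{2} = (2\pi)^{n}\langle f,g\rangle_{2}$ for $f,g\in\mathcal{S}(\mathbb{R}^{n})$ follows by writing $\overline{g(\mathbf{x})} = (\mathbf{F}^{-1}\mathbf{F}g)(\mathbf{x})^{*}$, substituting into $\langle f,g\rangle_{2}$, and swapping integrals; in particular $\|\mathbf{F}f\|_{2} = (2\pi)^{n/2}\|f\|_{2}$.

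Next, since $\mathbf{F}$ is a bounded linear operator from the dense subspace $\mathcal{S}(\mathbb{R}^{n})\subset L_{2}(\mathbb{R}^{n})$ into $L_{2}(\mathbb{R}^{n})$, the B.L.T.\ theorem extends it uniquely to a bounded linear operator $\mathbf{F}:L_{2}(\mathbb{R}^{n})\to L_{2}(\mathbb{R}^{n})$ with the same norm $(2\pi)^{n/2}$. The same argument extends $\mathbf{F}^{-1}$ continuously to $L_{2}(\mathbb{R}^{n})$, and the identities $\mathbf{F}^{-1}\mathbf{F}=\mathrm{id}=\mathbf{F}\mathbf{F}^{-1}$ pass to the closure, which simultaneously gives injectivity and surjectivity of $\mathbf{F}$ on $L_{2}(\mathbb{R}^{n})$. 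Finally, the displayed formula $(\mathbf{F}^{-1}g)(\mathbf{x})=(2\pi)^{-n}(\mathbf{F}g)(-\mathbf{x})$ is just the change of variable $\mathbf{y}\mapsto -\mathbf{y}$ in the inverse Fourier integral, verified first on $\mathcal{S}(\mathbb{R}^{n})$ and then extended by continuity.

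I expect the main obstacle to be the rigorous justification of the inversion formula on $\mathcal{S}(\mathbb{R}^{n})$: everything else is either formal manipulation, a density/continuity argument, or a change of variables. The Gaussian regularization trick together with the self-reciprocity of the Gaussian under $\mathbf{F}$ is the cleanest way through this step, and it is where the constant $(2\pi)^{n}$ in the inversion formula (and hence in Parseval's identity) actually gets fixed.
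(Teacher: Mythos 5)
The paper does not prove this theorem at all: it is stated as a classical result with a pointer to the literature (the reference \cite{plancherel}), so there is no in-paper argument to compare yours against. Your outline is the standard and correct proof of Plancherel's theorem under the paper's normalization (forward transform without a $2\pi$, inverse carrying $(2\pi)^{-n}$): density of $\mathcal{S}(\mathbb{R}^{n})$, inversion on $\mathcal{S}$ via Gaussian regularization and Fubini, the resulting identity $\Vert \mathbf{F}f\Vert _{2}=(2\pi )^{n/2}\Vert f\Vert _{2}$, extension by the B.L.T.\ theorem, and surjectivity from $\mathbf{F}\mathbf{F}^{-1}=\mathrm{id}$ passing to the closure. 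Two small points worth making explicit if you write this up in full: first, the displayed relation $\left( \mathbf{F}^{-1}g\right) \left( \mathbf{x}\right) =(2\pi )^{-n}\left( \mathbf{F}g\right) \left( -\mathbf{x}\right)$ follows immediately by comparing the two defining integrals (replace $\mathbf{x}$ by $-\mathbf{x}$ in the exponent), no substitution argument is really needed on $\mathcal{S}$; second, for general $g\in L_{2}(\mathbb{R}^{n})\setminus L_{1}(\mathbb{R}^{n})$ the integral defining $\mathbf{F}g$ is not absolutely convergent, so the identity must be read as a statement about the continuous extensions (e.g.\ as an $L_{2}$-limit of truncations), which your final density step does supply. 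With those caveats your proof is complete and is exactly the argument the cited source would give.
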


\section{$\mathbf{\protect\lambda }$-deformation of entire basis functions
of exponential type}

\label{lambda}

In this section we discuss a multidimensional generalisation of a well-known
Nyquist-Whitekker-Kotel'nikov-Shannon theorem which explains why Wiener
spaces $W_{\sigma }(\mathbb{R})$ are so important. Observe that
Nyquist-Whitekker-Kotel'nikov-Shannon theorem has its roots in the
Information Theory first developed by Shannon \cite{sha1}, \cite{sha2}, \cite%
{shannon}. There are various extensions of the mention above theorem to a
more general sets of lattice points in $\mathbb{R}^{n}$ \cite{Marks}.
However, these results are aside of our main line of research. In
particular, almost uniformly distributed lattice points in $\mathbb{R}^{n}$
can be used to reproduce trigonometric polynomials just with the spectrum
inside a properly scaled symmetric hyperbolic cross \cite{Kuipers}, \cite%
{Korobov}, \cite{Kushpel-Levesley}. Unfortunately, characteristic exponents
of density functions which correspond to a jump-diffusion process, which can
be used in pricing formulas, should admit an analytic extension into a
proper domain to guarantee the existence of the pricing integral. The shape
of such characteristic exponents is quite far from the hyperbolic cross.
Hence, number theoretic lattice points can not be effective in such
situations. The problem of constructing of lattice points which will
reproduce trigonometric polynomials inside the respective domain which
corresponds to the shape of a given characteristic exponent is a deep
problem of Geometry of Numbers which remains unsolved.

Consider several examples of characteristic functions which illustrate this
situation.

\textbf{Example 1.}\emph{\ Let }$W_{t}^{1}$\emph{\ and }$W_{t}^{2}$\emph{\
are risk-neutral Brownian motions with correlation }$\rho $\emph{\ and }$%
\sigma _{1},\sigma _{2}>0$\emph{. Consider the vector }$S_{t}=\left(
S_{1,t},S_{2,t}\right) $\emph{\ with components}%
\begin{equation*}
S_{k,t}=S_{k,0}\exp \left( \left( r-\sigma _{k}^{2}/2\right) t+\sigma
_{k}W_{t}^{k}\right) ,k=1,2.
\end{equation*}%
\emph{The joint characteristic function of }$X_{T}=\left( \ln S_{1,T},\ln
S_{2,T}\right) $\emph{\ has the form}%
\begin{equation*}
\Phi _{1}\left( \mathbf{u},T\right) =\exp \left( i\left\langle \mathbf{u,}rT%
\mathbf{e-}\frac{T}{2}\mathbf{\sigma }^{2}\right\rangle -\frac{T}{2}%
\left\langle \mathbf{u},\mathbf{\Sigma u}^{T}\right\rangle \right) ,
\end{equation*}%
\emph{where }$u=\left( u_{1},u_{2}\right) ,e=\left( 1,1\right) ,\sigma
^{2}=\left( \sigma _{1}^{2},\sigma _{2}^{2}\right) $\emph{, }%
\begin{equation*}
\mathbf{\Sigma }=\left( 
\begin{array}{cc}
\sigma _{1}^{2} & \sigma _{1}^{2}\sigma _{2}^{2}\rho \\ 
\sigma _{1}^{2}\sigma _{2}^{2}\rho & \sigma _{2}^{2}%
\end{array}%
\right)
\end{equation*}%
\emph{and }$u^{T}$\emph{\ means }$u$\emph{\ transposed. Direct calculation
shows }%
\begin{equation*}
\Phi _{1}\left( \mathbf{u},T\right) =\Phi _{1}\left( u_{1},u_{2}\right)
\end{equation*}%
\begin{equation*}
=\exp \left( -\frac{1}{2}T\left( ir\sigma _{1}^{2}u_{1}+2\rho \sigma
_{1}\sigma _{2}u_{1}u_{2}+\sigma _{1}u_{1}^{2}+ir\sigma _{2}^{2}u_{2}+\sigma
_{2}u_{2}^{2}-2iru_{1}-2iru_{2}\right) \right)
\end{equation*}%
\emph{The parameters are \cite{Dempster and Hong}, \cite{hurd}: }$r=0.1$%
\emph{, }$T=1$\emph{, }$\rho =0.5$\emph{, }$\sigma _{1}=0.2$\emph{, }$\sigma
_{2}=0.1$\emph{. For such set of parameters the function }$\Phi _{1}$\emph{\
simplifies as }%
\begin{equation*}
\Phi _{1}\left( u_{1},u_{2}\right)
\end{equation*}%
\begin{equation*}
=\exp \left( -\frac{1}{2}\left( 0.004iu_{1}+\allowbreak
0.02u_{1}u_{2}+0.2u_{1}^{2}+0.001iu_{2}+0.1\times
u_{2}^{2}-0.2iu_{1}-0.2iu_{2}\right) \right) .
\end{equation*}

\textbf{Example 2.} \emph{Consider a three factor stochastic volatility
model \cite{Dempster and Hong} which is defined as }%
\begin{equation*}
dX_{1}=\left( \left( r-\delta _{1}-\frac{\sigma _{1}^{2}}{2}\right)
dt+\sigma _{1}\upsilon ^{1/2}dW^{1}\right) ,
\end{equation*}%
\begin{equation*}
dX_{2}=\left( \left( r-\delta _{2}-\frac{\sigma _{2}^{2}}{2}\right)
dt+\sigma _{2}\upsilon ^{1/2}dW^{2}\right) ,
\end{equation*}%
\begin{equation*}
d\upsilon =\kappa \left( \mu -\upsilon \right) dt+\sigma _{\upsilon
}\upsilon ^{1/2}dW^{\upsilon },
\end{equation*}%
\emph{where }$dW^{1},dW^{2}$\emph{\ and }$dW^{\upsilon }$\emph{\ have
correlations }%
\begin{equation*}
\mathbb{E}\left[ dW^{1},dW^{2}\right] =\rho dt,
\end{equation*}%
\begin{equation*}
\mathbb{E}\left[ dW^{1},dW^{\upsilon }\right] =\rho _{1}dt,
\end{equation*}%
\begin{equation*}
\mathbb{E}\left[ dW^{2},dW^{\upsilon }\right] =\rho _{2}dt,
\end{equation*}%
$X_{t}=\left( \log S_{1,t},\log S_{2,t}\right) $\emph{\ and }$\upsilon _{t}$%
\emph{\ is the squared volatility. The characteristic function has the form }%
\begin{equation*}
\Phi _{2}\left( \mathbf{u}\right) =\Phi _{2}\left( u_{1},u_{2}\right)
=\left( ix\ln S_{1,0}+iy\ln S_{2,0}+\left( \frac{2\omega \left( 1-e^{-\theta
T}\right) }{2\theta -\left( \theta -\gamma \right) \left( 1-e^{-\theta
T}\right) }\right) \upsilon _{0}\right.
\end{equation*}%
\begin{equation*}
+i\left\langle \mathbf{u,}\left( r\mathbf{e}-\mathbf{\delta }\right)
\right\rangle T
\end{equation*}%
\begin{equation*}
\left. -\frac{\kappa \mu }{\sigma _{\upsilon }^{2}}\left( 2\log \left( \frac{%
2\theta -\left( \theta -\gamma \right) \left( 1-e^{-\theta T}\right) }{%
2\theta }\right) +\left( \theta -\gamma \right) T\right) \right) ,
\end{equation*}%
\emph{where}%
\begin{equation*}
\omega :=-\frac{1}{2}\left( \left( \sigma _{1}^{2}u_{1}^{2}+\sigma
_{2}^{2}u_{2}^{2}+2\rho \sigma _{1}\sigma _{2}u_{1}u_{2}\right) +i\left(
\sigma _{1}^{2}u_{1}+\sigma _{2}^{2}u_{2}\right) \right) ,
\end{equation*}%
\begin{equation*}
\gamma :=\kappa -i\left( \rho _{1}\sigma _{1}u_{1}+\rho _{2}\sigma
_{2}u_{2}\right) \sigma _{\upsilon }
\end{equation*}%
\begin{equation*}
\theta :=\left( \gamma ^{2}-2\sigma _{\upsilon }^{2}\omega \right) .
\end{equation*}%
\emph{Let us fix parameters as in \cite{Dempster and Hong}, p.16: }$%
r=0.1,T=1,\rho =0.5,\rho =0.5,\rho _{1}=0.25,\rho _{2}=-0.5,\delta
_{1}=0.05,\delta _{2}=0.05,\sigma _{1}=0.5,\sigma _{2}=1.0,\upsilon
_{0}=0.04,\kappa =1,\mu =0.04,\sigma _{\upsilon
}=0.05,S_{1,0}=96,S_{2,0}=100.$

\textbf{Example 3.} \emph{Following \cite{m3} consider the }$VG$\emph{\
process. The L\'{e}vy measure in this case is }%
\begin{equation*}
\Pi \left( x\right) =\frac{\lambda \left( e^{-a_{+}x}\chi _{\left[ 0,\infty
\right) }\left( x\right) +e^{a_{-}x}\chi _{\left( -\infty ,0\right] }\left(
x\right) \right) }{x},\lambda >0,a_{+}>0,a_{-}>0,
\end{equation*}%
\emph{where }%
\begin{equation*}
\chi _{A}\left( x\right) :=\left\{ 
\begin{array}{cc}
1, & x\in A, \\ 
0, & x\notin A,%
\end{array}%
\right.
\end{equation*}%
$A\in R$\emph{\ and the characteristic function is}%
\begin{equation*}
\Phi _{Y_{t}}\left( u\right) =\left( 1+i\left( \frac{1}{a_{-}}-\frac{1}{a_{+}%
}\right) u+\frac{u^{2}}{a_{-}a_{+}}\right) ^{-\lambda t}.
\end{equation*}%
\emph{Let }$Y_{k,t}$\emph{, }$k=1,2,3$\emph{\ be three independent }$VG$%
\emph{\ processes with common parameters }$a_{+}$\emph{, }$a_{-}$\emph{, }$%
\lambda _{1}=\lambda _{2}=\left( 1-\alpha \right) \lambda $\emph{, }$\lambda
_{3}=\alpha \lambda $\emph{, }$\alpha \in \left[ 0,1\right] .$\emph{\ The }$%
\log $\emph{\ return }$X_{k,t}=\log S_{k,t},k=1,2$\emph{\ is given by }%
\begin{equation*}
X_{k,t}=X_{k,0}+Y_{k,t}+Y_{3,t},k=1,2.
\end{equation*}%
\emph{The characteristic function has the form }%
\begin{equation*}
\Phi _{3}\left( \mathbf{u},T\right) =\Phi _{3}\left( u_{1},u_{2},T\right)
\end{equation*}%
\begin{equation*}
=\left( 1+i\left( \frac{1}{a_{-}}-\frac{1}{a_{+}}\right) \left(
u_{1}+u_{2}\right) +\frac{\left( u_{1}+u_{2}\right) ^{2}}{a_{-}a_{+}}\right)
^{-\alpha \lambda T}
\end{equation*}%
\begin{equation*}
\times \left( 1+i\left( \frac{1}{a_{-}}-\frac{1}{a_{+}}\right) u_{1}+\frac{%
u_{1}^{2}}{a_{-}a_{+}}\right) ^{-\left( 1-\alpha \right) \lambda T}
\end{equation*}%
\begin{equation*}
\times \left( 1+i\left( \frac{1}{a_{-}}-\frac{1}{a_{+}}\right) u_{2}+\frac{%
u_{2}^{2}}{a_{-}a_{+}}\right) ^{-\left( 1-\alpha \right) \lambda T}.
\end{equation*}%
\emph{Let us put }$T=1,a_{+}=2,a_{-}=3,\lambda =1,\alpha =0.5$\emph{.}

Observe that all three examples show high concentration of characteristic
functions around the origin. 

Consider a general case now. Let $\mathbf{a}$ be a fixed positive vector in $%
\mathbb{R}^{n}$, i.e. $\mathbf{a}=(a_{1},\cdots ,a_{n})\in \mathbb{R}^{n}$, $%
a_{k}>0$, $1\leq k\leq n$ and $\mathrm{A}=\mathrm{diag}(a_{1}^{-1},\cdots
,a_{n}^{-1})$ be a diagonal matrix generated by $\mathbf{a}$. Consider the
set of points in $\mathbb{R}^{n}$. 
\begin{equation*}
\Omega _{\mathbf{a}}=\left\{ \mathbf{z}_{\mathbf{m}}=\mathrm{A}\mathbf{m}^{%
\mathrm{T}}|\,\,\mathbf{m}\in \mathbb{Z}^{n}\right\} ,\,\,\,\mathbf{m}\in 
\mathbb{Z}^{n},
\end{equation*}%
where $\mathbf{m}^{\mathrm{T}}$ means transpose of $\mathbf{m}$. Observe
that 
\begin{equation*}
\mathbf{z}_{\mathbf{m}}=\left( \frac{m_{1}}{a_{1}},\cdots ,\frac{m_{n}}{a_{n}%
}\right) \in \mathbb{R}^{n}.
\end{equation*}%
for any fixed $\mathbf{m}\in \mathbb{Z}^{n}.$ Let 
\begin{equation*}
Q_{\mathbf{a}}:=\left\{ \mathbf{x}\left\vert {}\right. \,\mathbf{x}%
=(x_{1},\cdots ,x_{n})\in \mathbb{R}^{n},\,\,|x_{k}|\leq a_{k},\,1\leq k\leq
n\right\} .
\end{equation*}%
Denote by $W_{\mathbf{a}}(\mathbb{R}^{n})$ the space of functions $f\in
L_{2}(\mathbb{R}^{n})$ such that $\mathrm{supp}\,\mathbf{F}f\subset Q_{%
\mathbf{a}}.$ \ 

Let $C\left( \mathbb{C}^{n}\right) $ and $C(\mathbb{R}^{n})$ be the spaces
of continuous functions on $\mathbb{C}^{n}$ and $\mathbb{R}^{n}$
respectively. We construct a family of linear operators $\mathbf{P}_{\mathbf{%
\lambda }_{\mathbf{a}}}$,%
\begin{equation*}
\begin{array}{ccc}
\mathbf{P}_{\mathbf{\lambda }_{\mathbf{a}}}:C\left( \mathbb{C}^{n}\right) & 
\longrightarrow & W_{2\mathbf{a}}(\mathbb{R}^{n})+iW_{2\mathbf{a}}(\mathbb{R}%
^{n})\subset C(\mathbb{R}^{n})+iC(\mathbb{R}^{n}) \\ 
f\left( \mathbf{z}\right) & \longmapsto & \left( \mathbf{P}_{\mathbf{\lambda 
}_{\mathbf{a}}}f\right) \left( \mathbf{z}\right)%
\end{array}%
\end{equation*}%
such that 
\begin{equation*}
\left\Vert \mathbf{P}_{\mathbf{\lambda }_{\mathbf{a}}}\mathbf{|}C\left( 
\mathbb{C}^{n}\right) \longrightarrow C(\mathbb{R}^{n})+iC(\mathbb{R}%
^{n})\right\Vert <\infty
\end{equation*}%
and $\left( \mathbf{P}_{\mathbf{\lambda }_{\mathbf{a}}}f\right) \left( 
\mathbf{z}\right) =f\left( \mathbf{z}\right) $ for any $f\left( \mathbf{z}%
\right) \in W_{\mathbf{a}}(\mathbb{R}^{n}).$ The sign "$+$" means the
Minkowski sum of two vector spaces $C(\mathbb{R}^{n})$ and $iC(\mathbb{R}%
^{n})$ endowed with the induced topology of $C\left( \mathbb{C}^{n}\right)
\supset C(\mathbb{R}^{n})+iC(\mathbb{R}^{n}).$

\begin{theorem}
\label{t1} Let $f\left( \mathbf{z}\right) \in W_{\mathbf{a}}(\mathbb{R}^{n})$
and $\mathbf{\lambda }_{\mathbf{a}}:\mathbb{R}^{n}\longrightarrow \mathbb{R}$
be any continuous function such that $\mathbf{\lambda }_{\mathbf{a}}\left( 
\mathbf{y}\right) =1$ if $\mathbf{y}\in Q_{\mathbf{a}}$ and $\mathbf{\lambda 
}_{\mathbf{a}}\left( \mathbf{y}\right) =0$ if $\mathbf{y}\in $ $\mathbb{R}%
^{n}\setminus Q_{2\mathbf{a}},$ then 
\begin{equation*}
f\left( \mathbf{x}\right) =\sum_{\mathbf{m}\in \mathbb{Z}^{n}}f\left( \pi 
\mathrm{A}\mathbf{m}^{\mathrm{T}}\right) J_{\mathbf{m},\mathbf{\lambda }_{%
\mathbf{a}}}\left( \mathbf{x}\right)
\end{equation*}%
\begin{equation*}
=\sum_{m_{1}\in \mathbb{Z}}\cdot \cdot \cdot \sum_{m_{n}\in \mathbb{Z}%
}f\left( \pi \frac{m_{1}}{a_{1}},\cdot \cdot \cdot ,\pi \frac{m_{n}}{a_{n}}%
\right) J_{m_{1},\cdot \cdot \cdot ,m_{n},\mathbf{\lambda }_{a_{1},\cdot
\cdot \cdot ,a_{n}}}\left( x_{1},\cdot \cdot \cdot ,x_{n}\right),
\end{equation*}%
where%
\begin{equation*}
J_{\mathbf{m},\mathbf{\lambda }_{\mathbf{a}}}\left( \mathbf{x}\right)
=\pi^{n} \mathrm{det}\mathrm{A} \left( \mathbf{F}^{-1}\mathbf{\lambda }_{%
\mathbf{a}}\right) \left( \mathbf{x}-i\pi \frac{\mathbf{m}}{\mathbf{a}}%
\right)
\end{equation*}
\begin{equation*}
=2^{-n} \mathrm{det}\mathrm{A} \,\left( \mathbf{F\lambda }_{\mathbf{a}}
\right) \left( -\mathbf{x}+\pi \frac{\mathbf{m}}{\mathbf{a}}\right)
\end{equation*}
\end{theorem}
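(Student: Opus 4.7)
The plan is to pass to the Fourier side and reduce the reconstruction to a Fourier--series expansion on the cube $Q_{\mathbf{a}}$. Since $f\in W_{\mathbf{a}}(\mathbb{R}^{n})$, Plancherel's theorem gives
\[
f(\mathbf{x})=\frac{1}{(2\pi)^{n}}\int_{Q_{\mathbf{a}}}e^{i\langle \mathbf{x},\mathbf{y}\rangle}(\mathbf{F}f)(\mathbf{y})\,d\mathbf{y},
\]
so it suffices to expand the compactly supported function $\mathbf{F}f\in L_{2}(Q_{\mathbf{a}})$ in the orthogonal system $\{e^{-i\pi\langle\mathbf{m}/\mathbf{a},\mathbf{y}\rangle}\}_{\mathbf{m}\in\mathbb{Z}^{n}}$ adapted to the fundamental cell $Q_{\mathbf{a}}$ of side lengths $2a_{j}$, and then transplant the expansion back to $\mathbf{x}$-space through $\mathbf{F}^{-1}$.

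Computing the Fourier coefficient of $\mathbf{F}f$ against $e^{-i\pi\langle\mathbf{m}/\mathbf{a},\mathbf{y}\rangle}$ gives
\[
c_{\mathbf{m}}=\frac{\det\mathrm{A}}{2^{n}}\int_{Q_{\mathbf{a}}}(\mathbf{F}f)(\mathbf{y})\,e^{i\pi\langle\mathbf{m}/\mathbf{a},\mathbf{y}\rangle}\,d\mathbf{y}.
\]
Since $\mathbf{F}f$ vanishes outside $Q_{\mathbf{a}}$ the integral extends to all of $\mathbb{R}^{n}$, and Fourier inversion identifies it with $(2\pi)^{n}f(\pi\mathrm{A}\mathbf{m}^{\mathrm{T}})$; hence $c_{\mathbf{m}}=\pi^{n}\det\mathrm{A}\cdot f(\pi\mathrm{A}\mathbf{m}^{\mathrm{T}})$, which is precisely the sample factor appearing in the theorem. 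Using the hypothesis $\lambda_{\mathbf{a}}\equiv 1$ on $\mathrm{supp}\,\mathbf{F}f$ one then writes
\[
(\mathbf{F}f)(\mathbf{y})=\lambda_{\mathbf{a}}(\mathbf{y})\sum_{\mathbf{m}\in\mathbb{Z}^{n}}c_{\mathbf{m}}\,e^{-i\pi\langle\mathbf{m}/\mathbf{a},\mathbf{y}\rangle}
\]
as an identity in $L_{2}(\mathbb{R}^{n})$, applies $\mathbf{F}^{-1}$ termwise (legitimate by $L_{2}$-continuity of the Fourier transform), and invokes the shift rule $\mathbf{F}^{-1}[\lambda_{\mathbf{a}}(\mathbf{y})e^{-i\pi\langle\mathbf{m}/\mathbf{a},\mathbf{y}\rangle}](\mathbf{x})=(\mathbf{F}^{-1}\lambda_{\mathbf{a}})(\mathbf{x}-\pi\mathbf{m}/\mathbf{a})$ to obtain the first stated form of $J_{\mathbf{m},\lambda_{\mathbf{a}}}$. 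The equivalent second form then drops out of Plancherel's identity $\mathbf{F}^{-1}g(\mathbf{x})=(2\pi)^{-n}(\mathbf{F}g)(-\mathbf{x})$ combined with the numerical relation $\pi^{n}/(2\pi)^{n}=2^{-n}$.

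The delicate point is the $L_{2}(\mathbb{R}^{n})$ identity for $\mathbf{F}f$ just above. The Fourier series converges to $\mathbf{F}f$ on $Q_{\mathbf{a}}$, but its natural continuation to $\mathbb{R}^{n}$ is the $2\mathbf{a}$-periodization of $\mathbf{F}f|_{Q_{\mathbf{a}}}$, which disagrees with $\mathbf{F}f$ on the transition annulus $Q_{2\mathbf{a}}\setminus Q_{\mathbf{a}}$, where translated copies of $\mathbf{F}f$ appear. The cutoff $\lambda_{\mathbf{a}}$ annihilates everything outside $Q_{2\mathbf{a}}$ and reproduces $\mathbf{F}f$ on $Q_{\mathbf{a}}$, but on the annulus one has to track carefully the interplay between $\lambda_{\mathbf{a}}$ and these aliased tails so that the product still represents $\mathbf{F}f$ (which vanishes there). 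Handling this aliasing bookkeeping, together with the justification of Fubini/Tonelli for the interchange of summation with $\mathbf{F}^{-1}$, is the main technical obstacle; everything else is essentially linear algebra with Plancherel and the translation rule.
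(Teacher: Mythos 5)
Your route is the same as the paper's: expand $\mathbf{F}f$ in the exponential system of $L_{2}(Q_{\mathbf{a}})$, identify the coefficients with the samples $f(\pi \mathrm{A}\mathbf{m}^{\mathrm{T}})$ by Fourier inversion, multiply by the cutoff $\lambda _{\mathbf{a}}$, and invert term by term using the shift rule; your coefficient computation $c_{\mathbf{m}}=\pi ^{n}\det \mathrm{A}\,f(\pi \mathrm{A}\mathbf{m}^{\mathrm{T}})$ and the resulting form of $J_{\mathbf{m},\lambda _{\mathbf{a}}}$ agree with the paper's.

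However, the step you flag as ``delicate'' and postpone is not bookkeeping: it is the whole content of the theorem, and at the stated sampling rate the identity you need is false. The series $\sum_{\mathbf{m}}c_{\mathbf{m}}e^{-i\pi \langle \mathbf{m}/\mathbf{a},\mathbf{y}\rangle }$ converges on all of $\mathbb{R}^{n}$ to the periodization $P(\mathbf{y})=\sum_{\mathbf{k}\in \mathbb{Z}^{n}}(\mathbf{F}f)(y_{1}+2a_{1}k_{1},\dots ,y_{n}+2a_{n}k_{n})$. The $\mathbf{k}=\pm \mathbf{e}_{j}$ copies live in slabs of the form $a_{j}\leq |y_{j}|\leq 3a_{j}$, which meet $Q_{2\mathbf{a}}\setminus Q_{\mathbf{a}}$ exactly where $\lambda _{\mathbf{a}}$ is permitted to be nonzero (and, being continuous and equal to $1$ on $Q_{\mathbf{a}}$, it must be nonzero on part of that annulus). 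Hence $\lambda _{\mathbf{a}}P\neq \mathbf{F}f$ in general. Concretely, for $n=1$, $a=1$, take $\mathbf{F}f(y)=\max (0,1-|y|)$: for $y\in (1,2)$ one has $P(y)=y-1>0$ and $\lambda _{1}(y)>0$ near $y=1$, so $\lambda _{1}P$ does not vanish where $\mathbf{F}f$ does, and the reconstruction $\sum_{m}f(\pi m)J_{m,\lambda _{1}}$ is not $f$. The paper's own proof commits the identical error silently when it substitutes the $L_{2}(Q_{\mathbf{a}})$ expansion of $\mathbf{F}f$ into the integral over $Q_{2\mathbf{a}}$, so you have correctly located the weak point of the published argument; but no amount of aliasing bookkeeping repairs it. What is actually needed is oversampling: either take samples at $\frac{\pi }{2}\mathrm{A}\mathbf{m}^{\mathrm{T}}$, or assume $\mathrm{supp}\,\mathbf{F}f\subset Q_{\mathbf{a}/2}$ with $\lambda $ equal to $1$ on $Q_{\mathbf{a}/2}$ and supported in $Q_{\mathbf{a}}$ (the regime of the paper's later trapezoidal example), so that the translated copies miss the support of the cutoff.
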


\begin{proof}
For any $f\in $ $W_{\mathbf{a}}(\mathbb{R}^{n})$ we have 
\begin{equation*}
f(\mathbf{x})=\frac{1}{(2\pi )^{n}}\,\int_{\mathbb{R}^{n}}\,\left( \mathbf{F}%
f\right) (\mathbf{y})e^{i\mathbf{y}\cdot \mathbf{x}}d\mathbf{y}
\end{equation*}%
\begin{equation*}
=\frac{1}{(2\pi )^{n}}\,\int_{Q_{2\mathbf{a}}}\,\mathbf{\lambda }_{\mathbf{a}%
}\left( \mathbf{y}\right) \left( \mathbf{F}f\right) (\mathbf{y})e^{i\mathbf{y%
}\cdot \mathbf{x}}d\mathbf{y}
\end{equation*}%
because $\text{supp}\,\mathbf{F}f\subset Q_{\mathbf{a}}$ and $\mathbf{%
\lambda }_{\mathbf{a}}\left( \mathbf{y}\right) =1$ if $\mathbf{y\in }Q_{%
\mathbf{a}}$ and $\mathbf{\lambda }_{\mathbf{a}}\left( \mathbf{y}\right) =0$
if $\mathbf{y\in }\mathbb{R}^{n}\setminus Q_{2\mathbf{a}}$. Since the set 
\begin{equation*}
\varrho _{\mathbf{m}}(\mathbf{y,a}):=2^{-n/2}\left( \det \mathrm{A}\right)
^{1/2}\exp \left( i\pi \left\langle \mathrm{A}\mathbf{m}^{\mathrm{T}},%
\mathbf{y}\right\rangle \right)
\end{equation*}%
\begin{equation*}
=\left( \prod_{k=1}^{n}\,\left( \frac{1}{(2a_{k})^{1/2}}\right) \right)
\,\prod_{k=1}^{n}\,\exp \left( \frac{i\pi }{a_{k}}\,\,m_{k}y_{k}\right)
,\,\,\,\mathbf{m}\in \mathbb{Z}^{n}
\end{equation*}%
is an orthonormal basis in $L_{2}\left( Q_{\mathbf{a}}\right) $ then $\left( 
\mathbf{F}f\right) (\mathbf{y})$ can be represented as 
\begin{equation*}
\left( \mathbf{F}f\right) (\mathbf{y})=\sum_{\mathbf{m}\in \mathbb{Z}%
^{n}}\,\alpha _{\mathbf{m}}\varrho _{\mathbf{m}}(\mathbf{y,a}).
\end{equation*}%
Remind that $f\in $ $W_{\mathbf{a}}(\mathbb{R}^{n})\subset L_{2}(\mathbb{R}%
^{n})$. We understand the convergence in $L_{2}\left( Q_{\mathbf{a}}\right) $
in the sense that 
\begin{equation*}
\lim_{N\rightarrow \infty }\left\Vert \left( \mathbf{F}f\right) (\mathbf{y}%
)-\sum_{\mathbf{m}\in NQ_{\mathbf{a}}}\,\alpha _{\mathbf{m}}\varrho _{%
\mathbf{m}}(\mathbf{y,a})\right\Vert _{L_{2}\left( Q_{\mathbf{a}}\right) }=0.
\end{equation*}%
Observe that instead of $Q_{\mathbf{a}}$ we can take any neighborhood of $%
\mathbf{0}\in \mathbb{R}^{n}$. Using Plancherel's theorem we find that 
\begin{equation*}
\alpha _{\mathbf{m}}=\,\int_{Q_{\mathbf{a}}}\left( \mathbf{F}f\right) (%
\mathbf{y})\varrho _{\mathbf{m}}(-\mathbf{y,a})d\mathbf{y}
\end{equation*}%
\begin{equation*}
=\,\int_{Q_{\mathbf{a}}}\left( \mathbf{F}f\right) (\mathbf{y})\overline{%
\varrho _{\mathbf{m}}(\mathbf{y,a})}d\mathbf{y}
\end{equation*}%
\begin{equation*}
=\,\int_{\mathbb{R}^{n}}\left( \mathbf{F}f\right) (\mathbf{y})\varrho _{%
\mathbf{m}}(-\mathbf{y,a})d\mathbf{y}
\end{equation*}%
\begin{equation*}
=\,(2\pi )^{n}2^{-n/2}\left( \det \mathrm{A}\right) ^{1/2}\left( \mathbf{F}%
^{-1}\circ \mathbf{F}f\right) \left( -\pi \mathrm{A}\mathbf{m}^{\mathrm{T}%
}\right)
\end{equation*}%
\begin{equation*}
=(2\pi )^{n}2^{-n/2}\left( \det \mathrm{A}\right) ^{-1/2}f\left( -\pi 
\mathrm{A}\mathbf{m}^{\mathrm{T}}\right) .
\end{equation*}%
Applying Plancherel's theorem again we get 
\begin{equation*}
f(\mathbf{x})=\frac{1}{(2\pi )^{n}}\,\int_{\mathbb{R}^{n}}\,\sum_{\mathbf{m}%
\in \mathbb{Z}^{n}}\,(2\pi )^{n}2^{-n/2}\left( \det \mathrm{A}\right)
^{1/2}f\left( -\pi \mathrm{A}\mathbf{m}^{\mathrm{T}}\right)
\end{equation*}%
\begin{equation*}
\times \mathbf{\lambda }_{\mathbf{a}}\left( \mathbf{y}\right) \,\varrho _{%
\mathbf{m}}(\mathbf{y,a})\,e^{i\left\langle \mathbf{x,y}\right\rangle }\,d%
\mathbf{y}
\end{equation*}%
\begin{equation*}
=\,\int_{Q_{2\mathbf{a}}}\,\sum_{\mathbf{m}\in \mathbb{Z}^{n}}2^{-n/2}\left(
\det \mathrm{A}\right) ^{1/2}\,f\left( -\pi \mathrm{A}\mathbf{m}^{\mathrm{T}%
}\right)
\end{equation*}%
\begin{equation*}
\times 2^{-n/2}\left( \det \mathrm{A}\right) ^{1/2}\,\mathbf{\lambda }_{%
\mathbf{a}}\left( \mathbf{y}\right) \exp \left( i\left\langle \mathbf{x},%
\mathbf{y}\right\rangle +i\pi \left\langle \mathrm{A}\mathbf{m}^{\mathrm{T}},%
\mathbf{y}\right\rangle \right) \,d\mathbf{y}
\end{equation*}%
\begin{equation*}
=2^{-n}\det \mathrm{A}\,\sum_{\mathbf{m}\in \mathbb{Z}^{n}}\,f\left( -\pi 
\mathrm{A}\mathbf{m}^{\mathrm{T}}\right) \,\int_{Q_{2\mathbf{a}}}\mathbf{%
\lambda }_{\mathbf{a}}\left( \mathbf{y}\right) \exp \left( i\left\langle 
\mathbf{x},\mathbf{y}\right\rangle +i\pi \left\langle \mathrm{A}\mathbf{m}^{%
\mathrm{T}},\mathbf{y}\right\rangle \right) \,d\mathbf{y}
\end{equation*}%
Changing the index of summation and simplifying we get 
\begin{equation*}
f(\mathbf{x})=\sum_{\mathbf{m}\in \mathbb{Z}^{n}}\,f\left( \pi \mathrm{A}%
\mathbf{m}^{\mathrm{T}}\right) J_{\mathbf{m,\lambda }_{\mathbf{a}}}(\mathbf{x%
}),
\end{equation*}%
where 
\begin{equation*}
J_{\mathbf{m,\lambda }_{\mathbf{a}}}(\mathbf{x})=2^{-n}\det \mathrm{A}%
\int_{Q_{2\mathbf{a}}}\mathbf{\lambda }_{\mathbf{a}}\left( \mathbf{y}\right)
\exp \left( i\left\langle \mathbf{x},\mathbf{y}\right\rangle -i\pi
\left\langle \mathrm{A}\mathbf{m}^{\mathrm{T}},\mathbf{y}\right\rangle
\right) \,d\mathbf{y}
\end{equation*}%
\begin{equation*}
=2^{-n}\det \mathrm{A}\,\int_{Q_{2\mathbf{a}}}\mathbf{\lambda }_{\mathbf{a}%
}\left( \mathbf{y}\right) \exp \left( i\left\langle \mathbf{x}-\pi \mathrm{A}%
\mathbf{m}^{\mathrm{T}},\mathbf{y}\right\rangle \right) \,d\mathbf{y}
\end{equation*}%
\begin{equation*}
=\,2^{-n}\det \mathrm{A}\left( 2\pi \right) ^{n}\,\left( \mathbf{F}^{-1}%
\mathbf{\lambda }_{\mathbf{a}}\right) \left( \mathbf{x}-\pi \mathrm{A}%
\mathbf{m}^{\mathrm{T}}\right) .
\end{equation*}%
\begin{equation*}
=2^{-n}\det \mathrm{A}\,\left( \mathbf{F\lambda }_{\mathbf{a}}\right) \left(
-\mathbf{x}+\pi \mathrm{A}\mathbf{m}^{\mathrm{T}}\right) .
\end{equation*}
\end{proof}

Consider a particular form of $\mathbf{\lambda }$-deformation. Let 
\begin{equation}
\mathbf{\lambda }_{\mathbf{a}}\left( \mathbf{x}\right) =\mathbf{\lambda }%
_{a_{1},\cdot \cdot \cdot ,a_{n}}\left( x_{1},\cdot \cdot \cdot
,x_{n}\right) :=\dprod\limits_{k=1}^{n}\lambda _{a_{k}}\left( x_{k}\right) ,
\label{lambda11}
\end{equation}%
where 
\begin{equation*}
\lambda _{a_{k}}\left( x_{k}\right) :=\left\{ 
\begin{array}{cc}
0, & x\leq -a, \\ 
2a^{-1}x+2, & -a\leq x\leq -a/2, \\ 
1, & -a/2\leq x<a/2, \\ 
-2a^{-1}x+2 & a/2\leq x<a, \\ 
0, & x\geq a,%
\end{array}%
\right. ,
\end{equation*}%
Direct calculation shows that

\begin{equation*}
\left( \mathbf{F}\lambda _{a_{k}}\right) \left( y_{k}\right) =\left(
\int_{-a_{k}}^{-a_{k}/2}+\int_{-a_{k}/2}^{a_{k}/2}+\int_{a_{k}/2}^{a_{k}}%
\right) e^{-ixy_{k}}\lambda _{a_{k}}\left( x\right) dx
\end{equation*}%
\begin{equation*}
=\int_{-a_{k}}^{-a_{k}/2}e^{-ixy}\left( 2x+2\right)
dx+\int_{-a_{k}/2}^{a_{k}/2}e^{-ixy}dx+\int_{a_{k}/2}^{a_{k}}e^{-ixy}\left(
-2x+2\right) dx
\end{equation*}%
\begin{equation*}
=\frac{2}{a_{k}y^{2}}e^{-i\left( -\frac{a_{k}}{2}\right) y}\left( i\left( -%
\frac{a_{k}}{2}\right) y+1\right) -\frac{2}{a_{k}y^{2}}e^{-i\left(
-a_{k}\right) y}\left( i\left( -a_{k}\right) y+1\right)
\end{equation*}%
\begin{equation*}
+2\left( \frac{i}{y}e^{-i\left( -\frac{a_{k}}{2}\right) y}-\frac{i}{y}%
e^{-i\left( -a_{k}\right) y}\right)
\end{equation*}%
\begin{equation*}
+\frac{i}{y}e^{-i\left( \frac{a_{k}}{2}\right) y}-\allowbreak \frac{i}{y}%
e^{-i\left( -\frac{a_{k}}{2}\right) y}
\end{equation*}%
\begin{equation*}
+\frac{-2}{a_{k}y^{2}}e^{-ia_{k}y}\left( ia_{k}y+1\right) -\frac{-2}{%
a_{k}y^{2}}e^{-i\left( \frac{a_{k}}{2}\right) y}\left( i\left( \frac{a_{k}}{2%
}\right) y+1\right)
\end{equation*}%
\begin{equation*}
+2\left( \frac{i}{y}e^{-ia_{k}y}-\frac{i}{y}e^{-i\left( a_{k}/2\right)
y}\right)
\end{equation*}%
\begin{equation*}
=\frac{i}{y}e^{\frac{1}{2}ia_{k}y}-\frac{2}{a_{k}y^{2}}e^{ia_{k}y}+\frac{2}{%
a_{k}y^{2}}e^{\frac{1}{2}ia_{k}y}
\end{equation*}%
\begin{equation*}
+\frac{i}{y}e^{-i\left( \frac{a_{k}}{2}\right) y}-\allowbreak \frac{i}{y}%
e^{-i\left( -\frac{a_{k}}{2}\right) y}
\end{equation*}%
\begin{equation*}
=\frac{2}{a_{k}y^{2}}e^{-\frac{1}{2}ia_{k}y}-\frac{2}{a_{k}y^{2}}%
e^{-ia_{k}y}-\frac{i}{y}e^{-\frac{1}{2}ia_{k}y}
\end{equation*}%
\begin{equation*}
=\frac{4}{a_{k}y^{2}}\left( \cos \frac{1}{2}a_{k}y-\cos a_{k}y\right) .
\end{equation*}

\begin{lemma}
\bigskip 
\begin{equation*}
\left\Vert \mathbf{P}_{\lambda _{a}}\left\vert C\left( \mathbb{R}\right)
\rightarrow C\left( \mathbb{R}\right) \right. \right\Vert <\allowbreak
2.\,\allowbreak 834.
\end{equation*}
\end{lemma}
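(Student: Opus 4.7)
The plan is to identify the operator norm with a Lebesgue constant and bound it via two elementary pointwise estimates on the kernel. By the triangle inequality, for $f\in C(\mathbb{R})$ with $\|f\|_\infty\le 1$ one has $|(\mathbf{P}_{\lambda_a}f)(x)|\le\sum_m|J_{m,\lambda_a}(x)|$, with equality approached by a continuous $f$ whose values on the lattice $\{\pi m/a\}$ match the signs of $J_{m,\lambda_a}(x_0)$ at a sup-attaining $x_0$. Hence $\|\mathbf{P}_{\lambda_a}|C(\mathbb{R})\to C(\mathbb{R})\|=\sup_x\sum_m|J_{m,\lambda_a}(x)|$.

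From Theorem \ref{t1} and the computation of $\mathbf{F}\lambda_a$ directly preceding the lemma, the one-dimensional kernel is $J_m(x)=\frac{2(\cos(v/2)-\cos v)}{v^2}$ with $v=ax-\pi m$. Setting $t=ax$, the function $\Phi(t):=\sum_m|J_m(x)|$ depends only on $t$, so WLOG $a=1$. The shift $t\mapsto t+\pi$ combined with reindexing $m\mapsto m-1$ yields $\pi$-periodicity of $\Phi$, and reindexing $m\mapsto -m$ (together with evenness of cosine) gives $\Phi(-t)=\Phi(t)$. It therefore suffices to bound $\Phi$ on $[0,\pi/2]$.

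The two pointwise estimates are $|J_m(v)|\le 3/4$ (from $|\sin(3v/4)\sin(v/4)|\le 3v^2/16$ via $|\sin x|\le|x|$) and $|J_m(v)|\le 4/v^2$ (from $|\cos(v/2)-\cos v|\le 2$). For $t\in(0,\pi/2]$, the two smallest values of $|v_m|=|t-\pi m|$ occur at $m=0$ (giving $t$) and $m=1$ (giving $\pi-t$). Applying the $3/4$-bound to those two indices and the $4/v^2$-bound to all others, together with the classical identity $\sum_{m\in\mathbb{Z}}(t-\pi m)^{-2}=\csc^2 t$, yields
\begin{equation*}
\Phi(t)\le\frac{3}{2}+4\bigl(\csc^2 t-t^{-2}-(\pi-t)^{-2}\bigr)=\frac{3}{2}+4g(t),
\end{equation*}
where $g(t):=\sum_{m\notin\{0,1\}}(t-\pi m)^{-2}$.

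The main step is to show $g(t)\le 1/3$ on $(0,\pi/2]$. Pairing the index $m=-k$ with $m=k+1$ for $k\ge 1$, each pair $(t+k\pi)^{-2}+((k+1)\pi-t)^{-2}$ is strictly convex in $t$ with unique minimum at $t=\pi/2$, so each pair is non-increasing on $(0,\pi/2]$; consequently $g$ is non-increasing there. Thus
\begin{equation*}
g(t)\le\lim_{t\to 0^+}g(t)=\frac{1}{\pi^2}\Bigl(\frac{\pi^2}{3}-1\Bigr)=\frac{1}{3}-\frac{1}{\pi^2}<\frac{1}{3},
\end{equation*}
which gives $\Phi(t)<3/2+4/3=17/6\approx 2.833$ on $(0,\pi/2]$. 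The excluded value $\Phi(0)$ can be computed in closed form as $3/2$, still well below $17/6$. Since $17/6<2.834$, the lemma follows. The only nontrivial technical point is the monotonicity of $g$, which is handled cleanly by the pairing argument above; everything else reduces to the two elementary bounds on $|J_m|$ and the standard cotangent-type identity for $\csc^2 t$.
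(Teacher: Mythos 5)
Your argument is correct and follows essentially the same route as the paper: both reduce the operator norm to the Lebesgue constant $\sup_{x}\sum_{m}\left\vert J_{m,\lambda _{a}}(x)\right\vert $ for the kernel $2v^{-2}\left( \cos (v/2)-\cos v\right) $, exploit $\pi $-periodicity in $t=ax$, bound the nearest lattice terms by $3/4$ via $\left\vert 2\sin (3v/4)\sin (v/4)\right\vert \leq 3v^{2}/8$, and control the remaining terms through the $4/v^{2}$ decay. Your tail estimate using $\sum_{m}(t-\pi m)^{-2}=\csc ^{2}t$ together with the convexity/pairing argument is a mild sharpening of the paper's cruder comparisons with $\sum 1/(\pi m)^{2}$, but both computations arrive at the same final constant $17/6<2.834$.
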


\begin{proof}
\begin{equation*}
\left\Vert \mathbf{P}_{\lambda _{a}}\left\vert C\left( \mathbb{R}\right)
\rightarrow C\left( \mathbb{R}\right) \right. \right\Vert :=\sup \left\{ 
\mathbf{P}_{\lambda _{a}}f\left\vert \left\Vert f\right\Vert _{C\left( 
\mathbb{R}\right) }\leq 1\right. \right\}
\end{equation*}%
\begin{equation*}
=\sup \left\{ \sup_{x\in \mathbb{R}}\sum_{m\in \mathbb{Z}}f\left( \frac{\pi m%
}{a}\right) J_{m,\lambda _{a}}\left( x\right) \left\vert \left\Vert
f\right\Vert _{C\left( \mathbb{R}\right) }\leq 1\right. \right\}
\end{equation*}%
\begin{equation*}
\leq \sup_{x\in \mathbb{R}}\sum_{m\in \mathbb{Z}}\left\vert J_{m,\lambda
_{a}}\left( x\right) \right\vert
\end{equation*}%
\begin{equation*}
=\sup_{x\in \mathbb{R}}\sum_{m\in \mathbb{Z}}\left\vert 2^{-1}\left(
a^{-1}\right) ^{-1}\left( \mathbf{F}\lambda _{a}\right) \left( -x+\frac{\pi m%
}{a}\right) \right\vert
\end{equation*}%
\begin{equation*}
=2^{-1}a\sup_{x\in \mathbb{R}}\sum_{m\in \mathbb{Z}}\left\vert \frac{4}{%
a\left( ax-\pi m\right) ^{2}}\left( \cos \left( \frac{ax-\pi m}{2}\right)
-\cos \left( ax-\pi m\right) \right) \right\vert
\end{equation*}%
\begin{equation*}
=2\sup_{x\in \mathbb{R}}\sum_{m\in \mathbb{Z}}\left\vert \frac{1}{\left(
ax-\pi m\right) ^{2}}\left( \cos \left( \frac{ax-\pi m}{2}\right) -\cos
\left( ax-\pi m\right) \right) \right\vert .
\end{equation*}%
Observe that the function 
\begin{equation*}
\varphi \left( x\right) :=2\sum_{m\in \mathbb{Z}}\left\vert \frac{1}{\left(
ax-\pi m\right) ^{2}}\left( \cos \left( \frac{ax-\pi m}{2}\right) -\cos
\left( ax-\pi m\right) \right) \right\vert
\end{equation*}%
is $\pi /a$-periodic. Consequently 
\begin{equation*}
\sup \left\{ \varphi \left( x\right) \left\vert x\in \mathbb{R}\right.
\right\} =\sup \left\{ \varphi \left( x\right) \left\vert x\in \left[ 0,\pi
/a\right) \right. \right\}
\end{equation*}%
\begin{equation*}
\leq 2\left( \sup_{x\in \left[ 0,\pi /a\right) }\sum_{\left\vert
m\right\vert \geq 2}+\sup_{x\in \left[ 0,\pi /a\right) }\sum_{m=\left\{
-1,0,1\right\} }\right)
\end{equation*}%
\begin{equation*}
\left\vert \frac{1}{\left( ax-\pi m\right) ^{2}}\left( \cos \left( \frac{%
ax-\pi m}{2}\right) -\cos \left( ax-\pi m\right) \right) \right\vert .
\end{equation*}%
Clearly,%
\begin{equation*}
2\sup_{x\in \left[ 0,\pi /a\right) }\sum_{m\leq -2}\left\vert \frac{1}{%
\left( ax-\pi m\right) ^{2}}\left( \cos \left( \frac{ax-\pi m}{2}\right)
-\cos \left( ax-\pi m\right) \right) \right\vert
\end{equation*}%
\begin{equation*}
\leq 4\sum_{m\geq 2}\frac{1}{\pi ^{2}m^{2}}=\frac{4}{\pi ^{2}}\left( \frac{%
\pi ^{2}}{6}-1\right) .
\end{equation*}

Similarly%
\begin{equation*}
2\sup_{x\in \left[ 0,\pi /a\right) }\sum_{m\geq 2}\left\vert \frac{1}{\left(
ax-\pi m\right) ^{2}}\left( \cos \left( \frac{ax-\pi m}{2}\right) -\cos
\left( ax-\pi m\right) \right) \right\vert
\end{equation*}%
\begin{equation*}
\leq 4\sum_{m\geq 1}\frac{1}{\pi ^{2}m^{2}}=\frac{4}{\pi ^{2}}\frac{\pi ^{2}%
}{6}=\frac{2}{3}.
\end{equation*}%
and

\begin{equation*}
2\sup_{x\in \left[ 0,\pi /a\right) }\sum_{m=\left\{ -1,0,1\right\}
}\left\vert \frac{1}{\left( ax-\pi m\right) ^{2}}\left( \cos \left( \frac{%
ax-\pi m}{2}\right) -\cos \left( ax-\pi m\right) \right) \right\vert
\end{equation*}%
\begin{equation*}
\leq 2\left( 2\cdot 0.375+2\cdot \frac{1}{\pi ^{2}}\right) \approx
\allowbreak 1.\,\allowbreak 905\,284\,735.
\end{equation*}

Hence 
\begin{equation*}
\left\Vert \mathbf{P}_{\lambda _{a}}\left\vert C\left( \mathbb{R}\right)
\rightarrow C\left( \mathbb{R}\right) \right. \right\Vert \leq \frac{4}{\pi
^{2}}\left( \frac{\pi ^{2}}{6}-1\right) +\frac{2}{3}+1.\,\allowbreak
905\,284\,735\approx \allowbreak 2.\,\allowbreak 834.
\end{equation*}
\end{proof}

\begin{corollary}
\label{norm 1} 
\begin{equation*}
\left\Vert \mathbf{P}_{\lambda _{\mathbf{a}}}\left\vert C\left( \mathbb{R}%
^{n}\right) \rightarrow C\left( \mathbb{R}^{n}\right) \right. \right\Vert
\leq \dprod\limits_{k=1}^{n}\left\Vert \mathbf{P}_{\lambda
_{a_{k}}}\left\vert C\left( \mathbb{R}\right) \rightarrow C\left( \mathbb{R}%
\right) \right. \right\Vert \leq 2.\,\allowbreak 834^{n}
\end{equation*}%
and 
\begin{equation*}
\left\Vert \mathbf{P}_{\lambda _{\mathbf{a}}}\left\vert C\left( \mathbb{C}%
^{n}\right) \rightarrow C(\mathbb{C}^{n}))\right. \right\Vert \leq 2\times
2.\,\allowbreak 834^{n}.
\end{equation*}
\end{corollary}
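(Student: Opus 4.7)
The plan is to exploit the tensor product structure of $\lambda_{\mathbf{a}}$ given in (\ref{lambda11}) and reduce everything to the preceding one-dimensional lemma. Since $\lambda_{\mathbf{a}}(\mathbf{x})=\prod_{k=1}^{n}\lambda_{a_{k}}(x_{k})$, the Fourier transform separates as $(\mathbf{F}\lambda_{\mathbf{a}})(\mathbf{y})=\prod_{k=1}^{n}(\mathbf{F}\lambda_{a_{k}})(y_{k})$, and using $\det\mathrm{A}=\prod_{k=1}^{n}a_{k}^{-1}$ the closed form of the kernel from Theorem \ref{t1} factorises as $J_{\mathbf{m},\lambda_{\mathbf{a}}}(\mathbf{x})=\prod_{k=1}^{n}J_{m_{k},\lambda_{a_{k}}}(x_{k})$.

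For the first inequality, I would take $f\in C(\mathbb{R}^{n})$ with $\Vert f\Vert_{\infty}\leq 1$, insert this into the reconstruction formula of Theorem \ref{t1}, apply the triangle inequality, and use the product structure to split the multiple sum as $\sum_{\mathbf{m}\in\mathbb{Z}^{n}}\prod_{k=1}^{n}|J_{m_{k},\lambda_{a_{k}}}(x_{k})| = \prod_{k=1}^{n}\sum_{m_{k}\in\mathbb{Z}}|J_{m_{k},\lambda_{a_{k}}}(x_{k})|$. Because each factor in the right-hand product depends on the single variable $x_{k}$ only, taking the supremum over $\mathbf{x}\in\mathbb{R}^{n}$ commutes with the product, so the sup of the product equals the product of the one-dimensional suprema. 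Each such one-dimensional supremum is exactly the quantity bounded by $2.834$ in the preceding lemma, which yields the claimed product bound.

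For the complex extension I would use that the explicit formula derived just before the lemma gives $(\mathbf{F}\lambda_{a_{k}})(y)=\tfrac{4}{a_{k}y^{2}}(\cos(a_{k}y/2)-\cos(a_{k}y))$, which is real-valued for real $y$; hence each $J_{\mathbf{m},\lambda_{\mathbf{a}}}$ is real-valued on $\mathbb{R}^{n}$. For any $f\in C(\mathbb{C}^{n})$, decompose $f=u+iv$ with real-valued $u,v$ on the real axis; then $\mathbf{P}_{\lambda_{\mathbf{a}}}f=\mathbf{P}_{\lambda_{\mathbf{a}}}u+i\mathbf{P}_{\lambda_{\mathbf{a}}}v$, and since $\Vert u\Vert_{\infty},\Vert v\Vert_{\infty}\leq\Vert f\Vert_{\infty}$ the triangle inequality combined with the real bound already proved gives the factor of two.

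The argument is essentially mechanical once the tensor product factorisation of the kernel is observed; there is no deep obstacle. The only step requiring any care is the legitimate interchange of the supremum with the finite product over coordinates, which is valid precisely because the $n$ coordinate variables are decoupled in the factored kernel.
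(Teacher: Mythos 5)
Your argument is correct and is precisely the intended route: the paper states this corollary without proof, treating it as immediate from the one-dimensional lemma together with the tensor-product factorisation $J_{\mathbf{m},\lambda_{\mathbf{a}}}(\mathbf{x})=\prod_{k=1}^{n}J_{m_{k},\lambda_{a_{k}}}(x_{k})$ that you identify. Your justification of the two nontrivial interchanges (sum of products into product of sums for nonnegative terms, and supremum of a product of decoupled nonnegative factors into the product of suprema) and of the factor $2$ via the real/imaginary decomposition supplies exactly the details the paper leaves implicit.
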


Another example of $\lambda $-deformation is given by the function 
\begin{equation*}
\phi \left( x\right) =\frac{1}{\omega }\int_{-1}^{x}\left( g\left( 4t+\frac{3%
}{4}\right) -g\left( 4t-\frac{3}{4}\right) \right) dt,
\end{equation*}%
where%
\begin{equation*}
g\left( t\right) =\left\{ 
\begin{array}{cc}
\exp \left( -\left( 1-t^{2}\right) ^{-1}\right) , & t\in \left[ -1,1\right] ,
\\ 
0, & t\in \mathbb{R}\setminus \left[ -1,1\right] ,%
\end{array}%
\right.
\end{equation*}%
and%
\begin{equation*}
\omega =\frac{1}{4}\int_{-1}^{1}g\left( t\right) dt.
\end{equation*}%
Clearly, Fourier transform of $\phi \left( x\right) $ is an entire function
of type $1$ since $\phi \left( x\right) \equiv 0,x\in \mathbb{R}\setminus %
\left[ -1,1\right] .$ Applying the method of saddle point it is possible to
show that 
\begin{equation*}
\mathbf{F}g\left( y\right) \approx 2\func{Re}\left( \left( \frac{-i\pi }{%
\left( 2i\right) ^{1/2}}\right) ^{1/2}y^{-3/2}\exp \left( iy-4^{-1}-\left(
2iy\right) ^{1/2}\right) \right) \asymp y^{-3/2}\exp \left( -y^{1/2}\right) .
\end{equation*}%
Hence%
\begin{equation*}
\mathbf{F}\phi \left( y\right) \asymp \left\vert y\right\vert ^{-5/2}\exp
\left( -2\left\vert y\right\vert ^{1/2}\right) .
\end{equation*}%
Let 
\begin{equation*}
\lambda _{\mathbf{a}}\left( \mathbf{x}\right) =\dprod\limits_{k=1}^{n}\phi
\left( \frac{y_{k}}{a_{k}}\right) ,\mathbf{a}=\left( a_{1},\cdot \cdot \cdot
,a_{n}\right)
\end{equation*}%
then%
\begin{equation*}
J_{\mathbf{m,}\lambda _{\mathbf{a}}}\left( \mathbf{x}\right) =\left(
\dprod\limits_{k=1}^{n}\frac{1}{2a_{k}}\right) \mathbf{F}^{-1}\lambda _{%
\mathbf{a}}\left( \mathbf{x}-\pi \mathrm{A}\mathbf{m}^{\mathrm{T}}\right)
\end{equation*}%
and%
\begin{equation*}
=2^{-n}\mathbf{F}^{-1}\lambda _{\mathbf{1}}\left( \mathrm{A}^{-1}\mathbf{x}%
-\pi \mathbf{m}^{\mathrm{T}}\right) =2^{-n}\mathbf{F}^{-1}\lambda _{\mathbf{1%
}}\left( \mathrm{A}^{-1}\left( \mathbf{x}-\pi \mathrm{A}\mathbf{m}^{\mathrm{T%
}}\right) \right) ,\mathbf{1=}\left( 1,\cdot \cdot \cdot ,1\right) .
\end{equation*}%
Therefore%
\begin{equation*}
J_{\mathbf{m,}\lambda _{\mathbf{a}}}\left( \mathbf{x}\right) \asymp
\left\vert a_{k}x_{k}\right\vert ^{-5/2}\exp \left( -2a_{k}\left\vert x_{k}-%
\frac{\pi m_{k}}{a_{k}}\right\vert ^{1/2}\right) ,x_{k}\rightarrow \infty .
\end{equation*}%
and the norm of the interpolation operator can be bounded as 
\begin{equation*}
\left\Vert \mathbf{P}_{\lambda _{\mathbf{a}}}\left\vert C\left( \mathbb{C}%
^{n}\right) \rightarrow C(\mathbb{C}^{n}))\right. \right\Vert \leq 2\sup_{%
\mathbf{x}\in \mathbb{R}}\sum_{\mathbf{m}\in \mathbb{Z}^{n}}\left\vert J_{%
\mathbf{m},\lambda _{\mathbf{a}}}\left( \mathbf{x}\right) \right\vert \ll 1.
\end{equation*}

\section{approximation of density functions}

\label{approximation}

Consider the set $A_{\infty ,\delta }UM$ of analytic functions $f\left(
z\right) =u\left( x,y\right) +iv\left( x,y\right) ,z=x+iv$ in the strip $%
\left\vert \func{Im}z\right\vert \leq \delta $ such that $\left\vert u\left(
x,y\right) \right\vert \leq M$ for any $z\in \left\{ z|\left\vert \func{Im}%
z\right\vert \leq \delta \right\} $ and $f\left( x\right) \in \mathbb{R}$
for any $x\in \mathbb{R}$. It is known (see, \cite{timan}, p. 150) that for
almost all $x$ there are limits 
\begin{equation*}
\lim_{y\rightarrow \delta }u\left( x,y\right) =\lim_{y\rightarrow -\delta
}u\left( x,y\right) =g\left( x\right)
\end{equation*}
and such functions can be represented in the form%
\begin{equation*}
f\left( z\right) =\frac{1}{2\delta }\int_{\mathbb{R}}\left( \cosh \left( 
\frac{\pi \left( z-x\right) }{2\delta }\right) \right) ^{-1}g\left( x\right)
dx,
\end{equation*}%
where $\left\vert g\left( x\right) \right\vert \leq M.$ Conversely, since
the function $\left( \mathrm{\cosh }\left( \pi \left( x-iy\right) /2\delta
\right) \right) ^{-1}$ is analytic in the strip $\left\vert y\right\vert
<\delta ,x\in \mathbb{R}$ and 
\begin{equation*}
\func{Re}\left( \left( \cosh \left( \frac{\pi \left( z-x\right) }{2\delta }%
\right) \right) ^{-1}\right) >0,
\end{equation*}%
\begin{equation*}
\frac{1}{2\delta }\int_{\mathbb{R}}\left( \cosh \left( \frac{\pi \left(
z-x\right) }{2\delta }\right) \right) ^{-1}dx=1
\end{equation*}%
then the function 
\begin{equation*}
f\left( x+iy\right) =\frac{1}{2\delta }\int_{\mathbb{R}}\left( \cosh \left( 
\frac{\pi \left( x+iy-s\right) }{2\delta }\right) \right) ^{-1}g\left(
s\right) ds
\end{equation*}%
is analytic in the strip $\left\vert y\right\vert <\delta ,x\in \mathbb{R}$
and $\left\vert \func{Re}f\left( x+iy\right) \right\vert \leq M$ if $\mathrm{%
ess}\sup \left\vert g\left( x\right) \right\vert \leq M$. Observe that the
function 
\begin{equation*}
f\left( z\right) =\frac{1}{2i}\ln \left( \frac{e^{\pi z/\left( 2\delta
\right) }+i}{1+ie^{\pi z/\left( 2\delta \right) }}\right) \in A_{\infty
,\delta }UM
\end{equation*}%
is not bounded in the strip $\left\{ z|z=x+iy,\left\vert y\right\vert
<\delta ,x\in \mathbb{R}\right\} .$ It means that the set of functions $%
A_{\infty ,\delta }M$ which are bounded and analytically extendable into the
same strip is a proper subset of $A_{\infty ,\delta }UM$.

Similarly, in the multidimensional settings, the set $A_{\infty ,\mathbf{%
\delta }}UM,\mathbf{\delta }=\left( \delta _{1},\cdot \cdot \cdot ,\delta
_{n}\right) $ of functions $f\left( \mathbf{z}\right) ,\mathbf{z}=\left(
z_{1},\cdot \cdot \cdot ,z_{n}\right) \in \mathbb{C}^{n}$ which are
analytically extendable into the tube $\Omega \left( \mathbf{\delta }\right)
:=\left\{ \mathbf{z|z=}\left( z_{1},\cdot \cdot \cdot ,z_{n}\right) \mathbf{%
\in }\mathbb{C}^{n},\left\vert \func{Im}z_{k}\right\vert \leq \delta
_{k},1\leq k\leq n\right\} $%
\begin{equation*}
\Omega \left( \mathbf{\delta }\right) =\Omega \left( \delta _{1},\cdot \cdot
\cdot ,\delta _{n}\right) :=\left\{ \mathbf{z|z=}\left( z_{1},\cdot \cdot
\cdot ,z_{n}\right) \mathbf{\in }\mathbb{C}^{n},\left\vert \func{Im}%
z_{k}\right\vert \leq \delta _{k},1\leq k\leq n\right\}
\end{equation*}%
and such that $\left\vert \func{Re}f\left( \mathbf{z}\right) \right\vert
\leq M$, admits representation 
\begin{equation*}
f\left( \mathbf{z}\right) =\left( K\ast g\right) \left( \mathbf{z}\right)
:=\int_{\mathbb{R}^{n}}K\left( \mathbf{z-x}\right) g\left( \mathbf{x}\right)
d\mathbf{x},
\end{equation*}%
where%
\begin{equation}
K\left( \mathbf{z}\right) :=\dprod\limits_{k=1}^{n}K_{\left( k\right)
}\left( z_{k}\right)  \label{kernel 1}
\end{equation}%
and%
\begin{equation*}
K_{\left( k\right) }\left( x_{k}\right) :=\frac{1}{2\delta _{k}}\left( \cosh
\left( \frac{\pi z_{k}}{2\delta _{k}}\right) \right) ^{-1}.
\end{equation*}%
Let 
\begin{equation*}
E\left( f,W_{\mathbf{a}}\left( \mathbb{R}^{n}\right) ,L_{p}\left( \mathbb{R}%
^{n}\right) \right) :=\inf \left\{ \left\Vert f-g\right\Vert _{L_{p}\left( 
\mathbb{R}^{n}\right) }\left\vert g\in W_{\mathbf{a}}\left( \mathbb{R}%
^{n}\right) \right. \right\}
\end{equation*}%
be the best approximation of $f(z)$ by the space $W_{\mathbf{a}}\left( 
\mathbb{R}^{n}\right) $ in $L_{p}\left( \mathbb{R}^{n}\right) $ and 
\begin{equation*}
E\left( \mathcal{K},W_{\mathbf{a}}\left( \mathbb{R}^{n}\right) ,L_{p}\left( 
\mathbb{R}^{n}\right) \right) :=\sup \left\{ E\left( f,W_{\mathbf{a}}\left( 
\mathbb{R}^{n}\right) ,L_{p}\left( \mathbb{R}^{n}\right) \right) \left\vert
f\in \mathcal{K}\right. \right\} .
\end{equation*}%
be the best approximation of the function class $\mathcal{K}\in \
L_{p}\left( \mathbb{R}^{n}\right) $ by $W_{\mathbf{a}}\left( \mathbb{R}%
^{n}\right) $ in $\ L_{p}\left( \mathbb{R}^{n}\right) .$

\begin{lemma}
\label{entire function} Let 
\begin{equation*}
\vartheta _{\mathbf{a}}\left( \mathbf{x}\right)
:=\dprod\limits_{k=1}^{n}\vartheta _{a_{k}}\left( x_{k}\right) ,
\end{equation*}%
where $\vartheta _{a_{k}}\left( x_{k}\right) $ is an integrable entire
function of exponential type $a_{k}>0.$ Let $g\left( \mathbf{y}\right) $ be
a bounded function, i.e. $\left\vert g\left( \mathbf{y}\right) \right\vert
\leq M,\forall \mathbf{y\in }\mathbb{R}^{n}$. Then the function 
\begin{equation*}
\varrho _{\mathbf{a}}\left( \mathbf{x}\right) :=\int_{\mathbb{R}%
^{n}}\vartheta _{\mathbf{a}}\left( \mathbf{x-y}\right) g\left( \mathbf{y}%
\right) d\mathbf{y}
\end{equation*}%
is of exponential type $\mathbf{a}=\left( a_{1},\cdot \cdot \cdot
,a_{n}\right) .$
\end{lemma}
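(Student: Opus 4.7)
The plan is to extend $\varrho_{\mathbf{a}}$ analytically to all of $\mathbb{C}^n$ by the same convolution formula and then verify the two defining properties of exponential type $\mathbf{a}$: joint analyticity on $\mathbb{C}^n$ and the exponential growth estimate. Write $\mathbf{z}=\mathbf{u}+i\mathbf{v}$ with $\mathbf{u},\mathbf{v}\in\mathbb{R}^n$, and use the product structure $\vartheta_{\mathbf{a}}(\mathbf{z}-\mathbf{y})=\prod_{k=1}^{n}\vartheta_{a_k}(z_k-y_k)$ together with $|g|\le M$ and Fubini to reduce the analysis to one coordinate at a time.

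After translation $y_k\mapsto y_k-u_k$, the absolute convergence of the convolution integral reduces to controlling $\int_{\mathbb{R}}|\vartheta_{a_k}(x+iv_k)|\,dx$ for each $k$. The key ingredient is the classical Plancherel--P\'olya inequality for $L^1$ entire functions of exponential type, which gives
\begin{equation*}
\int_{\mathbb{R}}|\vartheta_{a_k}(x+iv_k)|\,dx \;\le\; e^{a_k|v_k|}\,\|\vartheta_{a_k}\|_{L^1(\mathbb{R})}.
\end{equation*}
Multiplying these $n$ estimates and using $|v_k|=|\operatorname{Im}z_k|\le|z_k|$ yields
\begin{equation*}
|\varrho_{\mathbf{a}}(\mathbf{z})| \;\le\; M\prod_{k=1}^{n}\|\vartheta_{a_k}\|_{L^1(\mathbb{R})}\,\exp\!\Bigl(\sum_{k=1}^{n}a_k|v_k|\Bigr) \;\le\; M'\exp\!\Bigl(\sum_{k=1}^{n}a_k|z_k|\Bigr),
\end{equation*}
which is precisely the exponential type $\mathbf{a}$ bound demanded by the paper's definition.

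For analyticity, I would apply differentiation under the integral sign. The integrand $\vartheta_{\mathbf{a}}(\mathbf{z}-\mathbf{y})g(\mathbf{y})$ is entire in $\mathbf{z}$ for each fixed $\mathbf{y}\in\mathbb{R}^n$, and on any compact polydisc $\{|z_k|\le R_k\}$ the Plancherel--P\'olya estimate above (taken with $|v_k|\le R_k$) provides a $\mathbf{y}$-integrable dominating function. This gives continuity and separate holomorphy of $\varrho_{\mathbf{a}}$ in each $z_k$; Hartogs' theorem then upgrades this to joint holomorphy on $\mathbb{C}^n$. The main obstacle is the $L^1$ version of the Plancherel--P\'olya inequality: it is standard but non-trivial and is typically proved via a contour shift, representing $\vartheta_{a_k}$ through its Paley--Wiener spectrum supported in $[-a_k,a_k]$ and translating the integration contour by $iv_k$, which produces exactly the factor $e^{a_k|v_k|}$. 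Once this one-dimensional estimate is in hand, everything else is routine Fubini and product-structure bookkeeping.
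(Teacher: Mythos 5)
Your proof is correct, but it takes a genuinely different route from the paper. The paper fixes one coordinate $x_l$ at a time, expands $\vartheta_{a_l}(x_l-y_l)$ as a Taylor series in $x_l$ about $0$, pushes the absolute values inside, and invokes Bernstein's integral inequality $\int_{\mathbb{R}}\bigl\vert \vartheta_{a}^{(s)}(y)\bigr\vert\,dy\le a^{s}\int_{\mathbb{R}}\vert\vartheta_{a}(y)\vert\,dy$ (quoted from Timan) to resum the series into $M K_l P_l e^{a_l\vert x_l\vert}$; analyticity of the convolution is taken for granted. You instead shift the contour and invoke the $L^{1}$ Plancherel--P\'olya inequality $\int_{\mathbb{R}}\vert\vartheta_{a_k}(x+iv_k)\vert\,dx\le e^{a_k\vert v_k\vert}\Vert\vartheta_{a_k}\Vert_{1}$, then multiply the one-dimensional estimates. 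Each approach rests on exactly one classical fact about entire functions of exponential type that is cited rather than proved, so they are comparable in depth; yours buys two things the paper's does not: a sharper bound $\exp\bigl(\sum_k a_k\vert\operatorname{Im}z_k\vert\bigr)$ (growth only transverse to the real subspace, which of course implies the required $\exp\bigl(\sum_k a_k\vert z_k\vert\bigr)$), and an explicit verification that $\varrho_{\mathbf a}$ is actually entire on $\mathbb{C}^{n}$ (domination on polydiscs plus Hartogs), a point the paper's argument silently assumes when it manipulates the power series. The paper's route, in exchange, stays entirely on the real line and needs no contour deformation.
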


\begin{proof}
It follows from the definition that it is sufficient to show that 
\begin{equation*}
\left\vert \varrho _{\mathbf{a}}\left( \mathbf{x}\right) \right\vert
=\left\vert \varrho \left( x_{1},\cdot \cdot \cdot ,x_{l},\cdot \cdot \cdot
,x_{n}\right) \right\vert \leq M_{l}e^{a_{l}\left\vert x_{l}\right\vert
},1\leq l\leq n
\end{equation*}%
for some absolute constant $M_{l}$ and fixed $x_{k},k\neq l$. Expanding $%
\vartheta _{a_{l}}\left( x_{l}-y_{l}\right) ,1\leq l\leq n$ into the power
series with respect to $x_{k}$ we get 
\begin{equation*}
\vartheta _{a_{l}}\left( x_{l}-y_{l}\right) =\sum_{s=0}^{\infty }\frac{%
\left( \vartheta _{a_{l}}\left( -y_{l}\right) \right) ^{\left( s\right) }}{l!%
}x_{l}^{s}.
\end{equation*}%
Hence%
\begin{equation*}
\left\vert \varrho _{\mathbf{a}}\left( \mathbf{x}\right) \right\vert
=\left\vert \int_{\mathbb{R}^{n}}\left( \dprod\limits_{k=1}^{n}\vartheta
_{a_{k}}\left( x_{k}-y_{k}\right) \right) g\left( y_{1},\cdot \cdot \cdot
,y_{n}\right) \dprod\limits_{k=1}^{n}dy_{k}\right\vert
\end{equation*}%
\begin{equation*}
=\left\vert \int_{\mathbb{R}^{n}}\left( \sum_{s=0}^{\infty }\frac{\left(
\vartheta _{a_{l}}\left( -y_{l}\right) \right) ^{\left( s\right) }}{s!}%
x_{l}^{s}\right) \left( \dprod\limits_{k=1,k\neq l}^{n}\vartheta
_{a_{k}}\left( x_{k}-y_{k}\right) \right) g\left( y_{1},\cdot \cdot \cdot
,y_{n}\right) \dprod\limits_{k=1}^{n}dy_{k}\right\vert
\end{equation*}%
\begin{equation*}
\leq M\int_{\mathbb{R}^{n}}\left( \sum_{s=0}^{\infty }\frac{\left\vert
\left( \vartheta _{a_{l}}\left( -y_{l}\right) \right) ^{\left( s\right)
}\right\vert }{s!}x_{l}^{s}\right) \left( \dprod\limits_{k=1,k\neq
l}^{n}\left\vert \vartheta _{a_{k}}\left( x_{k}-y_{k}\right) \right\vert
\right) \dprod\limits_{k=1}^{n}dy_{k}
\end{equation*}%
\begin{equation*}
\leq MK_{l}\int_{\mathbb{R}}\sum_{s=0}^{\infty }\frac{\left\vert \left(
\vartheta _{a_{l}}\left( y_{l}\right) \right) ^{\left( s\right) }\right\vert 
}{s!}x_{l}^{s}dy_{l},
\end{equation*}%
where 
\begin{equation*}
K_{l}:=\int_{\mathbb{R}^{n-1}}\dprod\limits_{k=1,k\neq l}^{n}\left\vert
\vartheta _{a_{k}}\left( x_{k}-y_{k}\right) \right\vert
\dprod\limits_{k=1,k\neq l}^{n}dy_{k}.
\end{equation*}%
Applying Bernstein's inequality \cite{timan} p.230%
\begin{equation*}
\int_{\mathbb{R}}\left\vert \left( \vartheta _{a}\left( y\right) \right)
^{\left( s\right) }\right\vert dy\leq a^{s}\int_{\mathbb{R}}\left\vert
\vartheta _{a}\left( y\right) \right\vert dy,s\in \mathbb{N},
\end{equation*}%
which is valid for any entire function $\vartheta _{a}\left( y\right) $ of
exponential type $a,$ we get 
\begin{equation*}
\left\vert \varrho _{\mathbf{a}}\left( \mathbf{x}\right) \right\vert \leq
MK_{l}P_{l}\sum_{s=0}^{\infty }\frac{a_{l}^{s}}{s!}\left\vert
x_{l}\right\vert ^{s}
\end{equation*}%
\begin{equation*}
=MK_{l}P_{l}e^{a_{l}\left\vert x_{l}\right\vert },
\end{equation*}%
where%
\begin{equation*}
P_{l}:=\int_{\mathbb{R}}\left\vert \vartheta _{a}\left( y\right) \right\vert
dy.
\end{equation*}%
It means that $\varrho _{\mathbf{a}}\left( \mathbf{x}\right) $ is of
exponential type $\mathbf{a}$.
\end{proof}

\begin{lemma}
\label{kernel approximation} 
\begin{equation*}
E\left( \left( K\ast g\right) \left( \mathbf{z}\right) ,W_{\mathbf{a}}\left( 
\mathbb{R}^{n}\right) ,L_{\infty }\left( \mathbb{R}^{n}\right) \right) \leq
M\| K- \vartheta _{\mathbf{a}} \|_{1}
\end{equation*}
\begin{equation*}
\leq 2Mn\dprod\limits_{k=1}^{n}\left( \frac{S_{a_{k}}}{2\delta _{k}}\right)
\exp \left( -\min \left\{ a_{k}\delta _{k}\left\vert 1\leq k\leq n\right.
\right\} \right),
\end{equation*}
and 
\begin{equation*}
E\left( \left( K\ast g\right) \left( \mathbf{z}\right) ,W_{\mathbf{a}}\left( 
\mathbb{R}^{n}\right) ,L_{1 }\left( \mathbb{R}^{n}\right) \right) \leq L\|
K- \vartheta _{\mathbf{a}} \|_{1}
\end{equation*}
\begin{equation*}
\leq 2Ln\dprod\limits_{k=1}^{n}\left( \frac{S_{a_{k}}}{2\delta _{k}}\right)
\exp \left( -\min \left\{ a_{k}\delta _{k}\left\vert 1\leq k\leq n\right.
\right\} \right),
\end{equation*}
where $K$ is defined by (\ref{kernel 1}), $|g| \leq M$, $g \in L_{1}(\mathbb{%
R}^{n})$and 
\begin{equation*}
\vartheta _{\mathbf{a}}\left( \mathbf{x}\right)
=\dprod\limits_{k=1}^{n}\vartheta _{a_{k}}\left( x_{k}\right) ,
\end{equation*}
where 
\begin{equation*}
\vartheta _{a_{k}}\left( x_{k}\right) :=2\delta _{k}\int_{0}^{a_{k}}\left(
\mu _{k}\left( \xi \right) -C_{k}\left( a_{k}-\xi \right) -C_{k}\left(
a_{k}+\xi \right) \right) \cos \xi x_{k}d\xi ,
\end{equation*}
\begin{equation*}
\mu _{k}\left( \xi \right) :=\left( \pi \cosh \left( \delta _{k}\xi \right)
\right) ^{-1},
\end{equation*}
\begin{equation*}
C_{k}\left( \xi \right) :=\sum_{s=0}^{\infty }\left( -1\right) ^{s}\mu
_{k}\left( \left( 2s+1\right) a_{k}+\xi \right) ,1\leq k\leq n.
\end{equation*}
and 
\begin{equation*}
S_{a_{k}}:=2\delta _{k}a_{k}\left( \frac{1}{2\delta _{k}}+\mu _{k}\left(
2a_{k}\right) +\mu _{k}\left( a_{k}\right) \right).
\end{equation*}
\end{lemma}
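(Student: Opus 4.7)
The plan is to take $\vartheta_{\mathbf{a}} * g$ as a trial approximant to $K * g$. First, by Lemma \ref{entire function} applied coordinate by coordinate, $\vartheta_{\mathbf{a}} * g$ is entire of exponential type $\mathbf{a}$; combined with its membership in $L_2(\mathbb{R}^n)$ (from Young's inequality, since $\vartheta_{\mathbf{a}}\in L_2$ and $g\in L_1\cap L_\infty$), Paley--Wiener places it in $W_{\mathbf{a}}(\mathbb{R}^n)$, so it is an admissible approximant. Young's convolution inequality then gives immediately
\begin{equation*}
E\bigl(K*g,W_{\mathbf{a}},L_p\bigr)\leq \|(K-\vartheta_{\mathbf{a}})*g\|_p \leq \|K-\vartheta_{\mathbf{a}}\|_1\,\|g\|_p,
\end{equation*}
accounting for the first inequalities in both cases ($p=\infty$ with $\|g\|_\infty\leq M$, and $p=1$ with $\|g\|_1\leq L$).

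The $L_1$ norm of the kernel difference is controlled by tensor telescoping:
\begin{equation*}
\prod_{k=1}^n K_{(k)}-\prod_{k=1}^n \vartheta_{a_k}=\sum_{k=1}^n\Bigl(\prod_{j<k}\vartheta_{a_j}\Bigr)\bigl(K_{(k)}-\vartheta_{a_k}\bigr)\Bigl(\prod_{j>k}K_{(j)}\Bigr),
\end{equation*}
and the $L_1$ norm of a tensor product factorizes. Using the normalization $\|K_{(j)}\|_1=1$ (the sech kernel is a probability density), we obtain
\begin{equation*}
\|K-\vartheta_{\mathbf{a}}\|_1 \leq \sum_{k=1}^n\|K_{(k)}-\vartheta_{a_k}\|_1\prod_{j<k}\|\vartheta_{a_j}\|_1,
\end{equation*}
reducing everything to one-dimensional estimates.

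For each $k$ one reads off from the defining formula the bound $\|\vartheta_{a_k}\|_1\leq S_{a_k}/(2\delta_k)$, and shows $\|K_{(k)}-\vartheta_{a_k}\|_1\leq (2S_{a_k}/(2\delta_k))\,e^{-a_k\delta_k}$. Assembling: using $S_{a_j}/(2\delta_j)\geq 1$ to enlarge each partial product to the full product, and the crude estimate $e^{-a_k\delta_k}\leq e^{-\min_j a_j\delta_j}$ in each summand, the telescoping sum is bounded by $2n\prod_k (S_{a_k}/(2\delta_k))\,e^{-\min_k a_k\delta_k}$, as claimed. The $L_1$ statement follows identically with $M$ replaced by $L$.

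The main obstacle lies in the one-dimensional bound $\|K_{(k)}-\vartheta_{a_k}\|_1 \ll S_{a_k}(2\delta_k)^{-1}e^{-a_k\delta_k}$, since this is where the specific shape of $\vartheta_{a_k}$---in particular the alternating series $C_k$---becomes essential. Working on the Fourier side, a direct computation gives $\mathbf{F}K_{(k)}(\xi)=(\cosh\delta_k\xi)^{-1}=\pi\mu_k(\xi)$, which is analytic in the strip $|\operatorname{Im}\xi|<\pi/(2\delta_k)$ and decays like $e^{-\delta_k|\xi|}$ on the real line. The correction $C_k$ is built as the Krein--Akhiezer--Sz.-Nagy periodization of the tail of $\mu_k$ beyond $[-a_k,a_k]$, arranged so that $\mathbf{F}\vartheta_{a_k}$ matches $\mathbf{F}K_{(k)}$ up to a residue whose inverse Fourier transform has $L_1$ norm controlled by $\int_{|\xi|>a_k}\mu_k(\xi)\,d\xi\asymp (2\delta_k)^{-1}e^{-a_k\delta_k}$. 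Carrying out this Poisson-summation/contour-shift computation rigorously, and tracking the constant $S_{a_k}$ that emerges from the residual and from $\|\vartheta_{a_k}\|_1$, is the technical heart of the proof.
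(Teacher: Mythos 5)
Your proposal follows essentially the same route as the paper's proof: the competitor is $\vartheta_{\mathbf a}\ast g$, shown to lie in $W_{\mathbf a}(\mathbb{R}^n)$ via Lemma \ref{entire function} together with an $L_2$ bound from Young's inequality; the first inequality is Young's inequality applied to $(K-\vartheta_{\mathbf a})\ast g$; and $\|K-\vartheta_{\mathbf a}\|_1$ is handled by the same telescoping product identity and reduction to one dimension. The step you single out as the ``technical heart'' is precisely the step the paper does not prove from scratch either: it cites Timan (p.\ 320) for the classical best $L_1$-approximation of $(\cosh(\delta_k\xi))^{-1}$ by entire functions of exponential type $a_k$,
\begin{equation*}
\left\Vert (\cosh(\delta_k\xi))^{-1}-\vartheta_{a_k}(\xi)\right\Vert_1\le\frac{2\delta_k}{\pi}\sum_{s=0}^{\infty}\frac{(-1)^{s}}{(2s+1)\cosh\left((2s+1)a_k\delta_k\right)}<2e^{-a_k\delta_k},
\end{equation*}
so your proposed contour-shift/Poisson-summation computation can simply be replaced by that citation. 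The only other point to flag is that the defining integral for $\vartheta_{a_k}$ yields the sup-norm bound $|\vartheta_{a_k}(x_k)|\le S_{a_k}$ (which is what the paper actually records as inequality (\ref{vartheta1})), not the $L_1(\mathbb{R})$ bound $\|\vartheta_{a_k}\|_1\le S_{a_k}/(2\delta_k)$ that you claim can be ``read off''; so your (otherwise cleaner) tensor-factorization step needs that $L_1$ bound established separately, or else the paper's looser bookkeeping with the constants $S_{a_k}/(2\delta_k)$.
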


\begin{proof}
We prove Lemma just in the case $p=\infty $. The case $p=1$ follows in the
similar way. It is easy to check that $\vartheta _{a_{k}}\left( x_{k}\right) 
$ is a function of exponential type $a_{k}$ and $\vartheta _{a_{k}}\left(
x_{k}\right) \in L_{2}\left( \mathbb{R}\right) .$ Hence $\vartheta _{\mathbf{%
a}}\left( \mathbf{x}\right) \in L_{2}\left( \mathbb{R}^{n}\right) .$

Consider the set of functions $K\ast g$, where $\left\vert g\right\vert \leq
M$ and $\left\Vert g\right\Vert _{1}\leq L.$ Assume that $\vartheta _{%
\mathbf{a}}\in L_{2}\left( \mathbb{R}^{n}\right) $. In this case,%
\begin{equation*}
\left\Vert \int_{\mathbb{R}^{n}}\vartheta _{\mathbf{a}}\ast g\right\Vert
_{2}\leq \left\Vert \vartheta _{\mathbf{a}}\right\Vert _{2}\left\Vert
g\right\Vert _{1}\leq L\left\Vert \vartheta _{\mathbf{a}}\right\Vert _{2}.
\end{equation*}%
Hence, by Lemma \ref{entire function}, $\left( \vartheta _{\mathbf{a}}%
\mathbf{\ast }g\right) \in W_{\mathbf{a}}\left( \mathbb{R}^{n}\right) $ and 
\begin{equation*}
E\left( \left( K\ast g\right) ,W_{\mathbf{a}}\left( \mathbb{R}^{n}\right)
,L_{\infty }\left( \mathbb{R}^{n}\right) \right) \leq \sup \left\{
\left\Vert K\ast g-\vartheta _{\mathbf{a}}\ast g\right\Vert _{\infty
}\left\vert \left\vert g\right\vert \leq M,\left\Vert g\right\Vert _{1}\leq
L\right. \right\}
\end{equation*}%
\begin{equation}
\leq M\left\Vert K-\vartheta _{\mathbf{a}}\right\Vert _{1}.  \label{k-t1}
\end{equation}%
Observe that for any complex numbers $\rho _{1,m}$ and $\rho _{2,m},1\leq
m\leq n$ we have%
\begin{equation}
\dprod\limits_{m=1}^{n}\rho _{1,m}-\dprod\limits_{m=1}^{n}\rho
_{2,m}=\sum_{m=1}^{n}\left( \rho _{1,m}-\rho _{2,m}\right)
\dprod\limits_{r=1}^{m-1}\rho _{2,r}\dprod\limits_{r=m+1}^{n}\rho _{1,r}.
\label{product1}
\end{equation}%
It easy to check that 
\begin{equation}
\sup_{z_{k}\in \mathbb{R}}\frac{1}{2\delta _{k}}\left( \cosh \left( \frac{%
\pi z_{k}}{2\delta _{k}}\right) \right) ^{-1}\leq \frac{1}{2\delta _{k}},
\label{sup prod}
\end{equation}%
\begin{equation*}
0\leq \mu _{k}\left( \xi \right) \leq \frac{1}{\pi },\xi \in \mathbb{R},
\end{equation*}%
\begin{equation*}
\left\vert C_{k}\left( a_{k}+\xi \right) \right\vert \leq \mu _{k}\left(
2a_{k}\right) ,0\leq \xi \leq a_{k}
\end{equation*}%
and%
\begin{equation*}
\left\vert C_{k}\left( a_{k}-\xi \right) \right\vert \leq \mu _{k}\left(
a_{k}\right) ,0\leq \xi \leq a_{k}.
\end{equation*}%
Hence%
\begin{equation}
\left\vert \vartheta _{a_{k}}\left( x_{k}\right) \right\vert \leq S_{a_{k}}.
\label{vartheta1}
\end{equation}%
Comparing (\ref{k-t1}) - (\ref{vartheta1}) we get 
\begin{equation*}
E\left( \left( K\ast g\right) ,W_{\mathbf{a}}\left( \mathbb{R}^{n}\right)
,L_{\infty }\left( \mathbb{R}^{n}\right) \right)
\end{equation*}%
\begin{equation*}
\leq Mn\dprod\limits_{k=1}^{n}\left( \frac{S_{a_{k}}}{2\delta _{k}}\right)
\max \left\{ \left\Vert \left( \cosh \left( \delta _{k}\xi \right) \right)
^{-1}-\vartheta _{a_{k}}\left( \xi \right) \right\Vert _{1}\left\vert 1\leq
k\leq n\right. \right\}
\end{equation*}%
It is known (see, e.g. \cite{timan}, p. 320) that for sufficiently large $%
a_{k},$ 
\begin{equation*}
\left\Vert \left( \cosh \left( \delta _{k}\xi \right) \right)
^{-1}-\vartheta _{a_{k}}\left( \xi \right) \right\Vert _{1}\leq \frac{%
2\delta _{k}}{\pi }\sum_{s=0}^{\infty }\frac{\left( -1\right) ^{s}}{\left(
2s+1\right) \cosh \left( \left( 2s+1\right) a_{k}\delta _{k}\right) }.
\end{equation*}%
Observe that 
\begin{equation*}
\sum_{k=0}^{\infty }\frac{\left( -1\right) ^{k}}{\left( 2k+1\right) \cosh
\left( \left( 2k+1\right) \sigma \delta \right) }<\frac{1}{\cosh \left(
\sigma \delta \right) }
\end{equation*}%
\begin{equation*}
=\frac{2}{e^{\sigma \delta }+e^{-\sigma \delta }}<2e^{-\sigma \delta }.
\end{equation*}%
Hence, 
\begin{equation*}
E\left( \left( K\ast g\right) ,W_{\mathbf{a}}\left( \mathbb{R}^{n}\right)
,L_{\infty }\left( \mathbb{R}^{n}\right) \right)
\end{equation*}%
\begin{equation*}
\leq 2Mn\dprod\limits_{k=1}^{n}\left( \frac{S_{a_{k}}}{2\delta _{k}}\right)
\exp \left( -\min \left\{ a_{k}\delta _{k}\left\vert 1\leq k\leq n\right.
\right\} \right) .
\end{equation*}
\end{proof}

\begin{lemma}
\label{q density} \bigskip Let $B\left( \mathbb{R}^{n}\right) $ be the set
of bounded measurable functions on $\mathbb{R}^{n}$ then for any density
function $p_{t}\mathbf{\in }B\left( \mathbb{R}^{n}\right) $ we have 
\begin{equation*}
p_{t}=\mathbf{F}^{-1}\left( e^{-t\psi \left( \mathbf{\cdot }\right) }\right) 
\mathbf{\in }\dbigcap\limits_{q=1}^{\infty }L_{q}\left( \mathbb{R}%
^{n}\right) .
\end{equation*}
\end{lemma}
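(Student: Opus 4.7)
The plan is to exploit the two extreme integrability properties that $p_{t}$ already possesses and interpolate between them. On one end, being a probability density means $p_{t}\geq 0$ and $\int_{\mathbb{R}^{n}}p_{t}(\mathbf{x})\,d\mathbf{x}=1$, so $p_{t}\in L_{1}(\mathbb{R}^{n})$ with $\|p_{t}\|_{1}=1$. On the other end, the hypothesis $p_{t}\in B(\mathbb{R}^{n})$ gives $\|p_{t}\|_{\infty}=:M<\infty$. These two endpoints are exactly what is needed to reach every intermediate $L_{q}$.

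Concretely, for any finite $q\in(1,\infty)$ I would write
\begin{equation*}
\int_{\mathbb{R}^{n}}|p_{t}(\mathbf{x})|^{q}\,d\mathbf{x}
=\int_{\mathbb{R}^{n}}|p_{t}(\mathbf{x})|^{q-1}\,p_{t}(\mathbf{x})\,d\mathbf{x}
\leq \|p_{t}\|_{\infty}^{\,q-1}\,\|p_{t}\|_{1}
= M^{\,q-1},
\end{equation*}
which yields the quantitative bound $\|p_{t}\|_{q}\leq M^{(q-1)/q}$, finite for every $q\geq 1$. Hence $p_{t}\in L_{q}(\mathbb{R}^{n})$ for each $q\in[1,\infty)$, and taking the intersection gives the claim. (If one wishes to include $q=\infty$, it is already part of the hypothesis.)

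The identification $p_{t}=\mathbf{F}^{-1}\!\bigl(e^{-t\psi(\cdot)}\bigr)$ is not strictly needed for the integrability conclusion; it only serves to recall where $p_{t}$ comes from (the L\'{e}vy--Khintchine characteristic exponent $\psi$) and to guarantee that the inverse Fourier transform makes sense, which is already built into the assumption $p_{t}\in B(\mathbb{R}^{n})$ together with its being the density of the process at time $t$. So the main, and essentially only, step is the interpolation inequality above; there is no genuine obstacle, the result is a one-line consequence of H\"{o}lder-type interpolation between $L_{1}$ and $L_{\infty}$. If one prefers a non-computational phrasing, it suffices to invoke the Riesz--Thorin (or log-convexity) inequality $\|p_{t}\|_{q}\leq \|p_{t}\|_{1}^{1/q}\|p_{t}\|_{\infty}^{1-1/q}$ and observe that both factors are finite.
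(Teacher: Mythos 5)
Your proof is correct, and it takes a more elementary route than the paper's. You go straight to the interpolation inequality $\int |p_{t}|^{q} = \int |p_{t}|^{q-1}p_{t} \leq \Vert p_{t}\Vert _{\infty }^{q-1}\Vert p_{t}\Vert _{1}$, which is all that is needed: a nonnegative function that is simultaneously in $L_{1}$ (total mass one) and in $L_{\infty }$ (bounded) lies in every $L_{q}$, $1\leq q\leq \infty $. The paper reaches the same conclusion by a longer path: it introduces the symmetric decreasing rearrangement $p_{t}^{\sim }$, observes that $p_{t}^{\sim }(\mathbf{x})\rightarrow 0$ at infinity so that $p_{t}^{\sim }\leq 1$ outside a set $B$ of finite volume, and then splits $\int (p_{t}^{\sim })^{q}$ into the contribution of $B$ (bounded by $C^{q}\,\mathrm{Vol}_{n}B$) and of its complement (where $(p_{t}^{\sim })^{q}\leq p_{t}^{\sim }$). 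That is the same two-endpoint idea in disguise, and the rearrangement is in fact dispensable: the set $\{p_{t}>1\}$ already has measure at most one by Chebyshev, so the splitting could be done directly, while your H\"{o}lder-type bound bypasses the splitting altogether and yields the cleaner quantitative estimate $\Vert p_{t}\Vert _{q}\leq M^{(q-1)/q}$. The one point you treat more lightly than the paper is the identification $p_{t}=\mathbf{F}^{-1}\left( e^{-t\psi (\cdot )}\right) $, which the paper justifies (tersely) by invoking Plancherel's theorem once $p_{t}\in L_{1}\cap L_{2}$ is established; your remark that this is built into the meaning of $p_{t}$ is defensible, but adding one sentence noting that the $L_{2}$ membership you have just proved is precisely what legitimizes the Plancherel inversion would make your argument self-contained on that point as well.
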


\begin{proof}
\bigskip The symmetric rearrangement of a measurable set $A\in \mathbb{R}%
^{n} $ is defined as $A^{\sim }=\left\{ \mathbf{x}\in \mathbb{R}%
^{n}\left\vert \omega _{n}\left\langle \mathbf{x},\mathbf{x}\right\rangle
^{n/2}<\mathrm{Vol}_{n}A\right. \right\} ,$ where $\omega _{n}$ is the
volume of the unit ball in $\mathbb{R}^{n}$. The symmetric rearrangement $%
f^{\sim }$ of a measurable function $f\geq 0$ is defined as 
\begin{equation*}
f^{\sim }\left( \mathbf{x}\right) =\int_{0}^{\infty }\chi _{\left\{ \mathbf{y%
}:f\left( \mathbf{y}\right) >t\right\} ^{\sim }}\left( \mathbf{x}\right) dt
\end{equation*}%
Since 
\begin{equation*}
\int_{\mathbb{R}^{n}}p_{t}\left( \mathbf{x}\right) d\mathbf{x}=1,p_{t}\left( 
\mathbf{x}\right) \geq 0,\forall \mathbf{x\in }\mathbb{R}^{n}
\end{equation*}%
then $p_{t}\mathbf{\in }B\left( \mathbb{R}^{n}\right) \cap L_{1}\left( 
\mathbb{R}^{n}\right) $ and $\lim_{\left\langle \mathbf{x,x}\right\rangle
\rightarrow \infty }p_{t}^{\sim }\left( \mathbf{x}\right) =0$. It means that
there is a set $B\subset \mathbb{R}^{n}$ such that $\mathrm{Vol}_{n}B<\infty 
$ and $p_{t}^{\sim }\left( \mathbf{x}\right) \leq 1,\forall \mathbf{x\in }%
\mathbb{R}^{n}\setminus B.$ Hence for any $1\leq q<\infty $ we get 
\begin{equation*}
\int_{\mathbb{R}^{n}}p_{t}^{q}\left( \mathbf{x}\right) d\mathbf{x}=\int_{%
\mathbb{R}^{n}}\left( p_{t}^{\sim }\left( \mathbf{x}\right) \right) ^{q}d%
\mathbf{x=}\left( \int_{B}+\int_{\mathbb{R}^{n}\setminus B}\right) \left(
p_{t}^{\sim }\left( \mathbf{x}\right) \right) ^{q}d\mathbf{x}
\end{equation*}%
\begin{equation*}
\leq C^{q}\mathrm{Vol}_{n}B+\int_{\mathbb{R}^{n}\setminus B}p_{t}^{\sim
}\left( \mathbf{x}\right) d\mathbf{x\leq }C^{q}\mathrm{Vol}_{n}B+1.
\end{equation*}%
Therefore, $p_{t}\left( \mathbf{x}\right) \in B\left( \mathbb{R}^{n}\right)
\cap L_{1}\left( \mathbb{R}^{n}\right) \cap L_{q}\left( \mathbb{R}%
^{n}\right) $ and applying Plancherel's theorem we get $p_{t}\left( \mathbf{x%
}\right) =\mathbf{F}^{-1}\left( e^{-t\psi \left( \mathbf{\cdot }\right)
}\right) \left( \mathbf{x}\right) .$
\end{proof}

\begin{lemma}
\label{uniform approximation} \bigskip In our notations 
\begin{equation*}
\left\Vert e^{-t\psi \left( \mathbf{\cdot }\right) }- \sum_{\mathbf{m}\in 
\mathbb{Z}^{n}}\exp \left( -t\psi \left( \pi \mathrm{A}\mathbf{m}^{\mathrm{T}%
}\right) \right) J_{\mathbf{m},\lambda _{a}}\left( \cdot \right) \right\Vert
_{\infty }
\end{equation*}%
\begin{equation*}
\leq 2\left( 1+2\times 2.834^{n}\right) Mn\dprod\limits_{k=1}^{n}\left( 
\frac{S_{a_{k}}}{2\delta _{k}}\right) \exp \left( -\min \left\{ a_{k}\delta
_{k}\left\vert 1\leq k\leq n\right. \right\} \right).
\end{equation*}
\end{lemma}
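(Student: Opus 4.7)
The plan is to reduce this bound to the combination of the best-approximation estimate (Lemma \ref{kernel approximation}) and the operator-norm estimate of the interpolation operator (Corollary \ref{norm 1}), via the standard Lebesgue-type triangle inequality. First I would invoke the hypothesis that $e^{-t\psi(\cdot)}$ extends analytically to the tube $\Omega(\mathbf{\delta})$ with bounded real part, so that it admits the representation $e^{-t\psi} = K\ast g$ with $|g|\leq M$ appearing in Lemma \ref{kernel approximation}. By that lemma there exists an entire function $\phi \in W_{\mathbf{a}}(\mathbb{R}^{n})$, namely $\phi = \vartheta_{\mathbf{a}}\ast g$, such that
\begin{equation*}
\|e^{-t\psi(\cdot)}-\phi\|_{\infty} \leq 2Mn\prod_{k=1}^{n}\left(\frac{S_{a_{k}}}{2\delta_{k}}\right)\exp\left(-\min\{a_{k}\delta_{k}\mid 1\leq k\leq n\}\right).
\end{equation*}

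Next I would exploit the reproducing property supplied by Theorem \ref{t1}: since $\phi \in W_{\mathbf{a}}(\mathbb{R}^{n})$, one has exactly
\begin{equation*}
\phi(\mathbf{x}) = \sum_{\mathbf{m}\in\mathbb{Z}^{n}} \phi(\pi\mathrm{A}\mathbf{m}^{\mathrm{T}})\, J_{\mathbf{m},\lambda_{\mathbf{a}}}(\mathbf{x}) = \bigl(\mathbf{P}_{\lambda_{\mathbf{a}}}\phi\bigr)(\mathbf{x}).
\end{equation*}
Denoting $f:=e^{-t\psi(\cdot)}$, I would then write the familiar decomposition
\begin{equation*}
f - \mathbf{P}_{\lambda_{\mathbf{a}}}f = (f-\phi) - \mathbf{P}_{\lambda_{\mathbf{a}}}(f-\phi),
\end{equation*}
so that by the triangle inequality and the definition of the operator norm,
\begin{equation*}
\|f-\mathbf{P}_{\lambda_{\mathbf{a}}}f\|_{\infty} \leq \bigl(1+\|\mathbf{P}_{\lambda_{\mathbf{a}}}\|\bigr)\,\|f-\phi\|_{\infty}.
\end{equation*}

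I would then insert the estimate $\|\mathbf{P}_{\lambda_{\mathbf{a}}}\,|\,C(\mathbb{C}^{n})\to C(\mathbb{C}^{n})\| \leq 2\times 2.834^{n}$ from Corollary \ref{norm 1}, multiply by the approximation bound from Lemma \ref{kernel approximation}, and the claimed inequality drops out immediately. The only genuine step requiring care is the first one: verifying that the target characteristic function $e^{-t\psi(\cdot)}$ really fits the hypothesis of Lemma \ref{kernel approximation}, i.e.\ that it belongs to the class of functions admitting a convolution representation by the kernel $K$ with a bounded factor $g$. This is the point where the analytic extendibility of $\psi$ into $\Omega(\mathbf{\delta})$, already emphasised in Section \ref{lambda} for the examples, is essential; once that is granted, the rest of the argument is a mechanical assembly of the two previous lemmas with the triangle inequality.
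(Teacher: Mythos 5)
Your proposal is correct and follows essentially the same route as the paper: the paper likewise writes $e^{-t\psi}=K\ast g$, inserts the best approximant $\rho_{\mathbf{a}}\in W_{\mathbf{a}}(\mathbb{R}^{n})$ (which the interpolation operator reproduces exactly by Theorem \ref{t1}), and applies the Lebesgue-type inequality $\Vert f-\mathbf{P}_{\lambda_{\mathbf{a}}}f\Vert_{\infty}\leq (1+\Vert \mathbf{P}_{\lambda_{\mathbf{a}}}\Vert)\,E\left( K\ast g,W_{\mathbf{a}},L_{\infty}\right)$ together with Lemma \ref{kernel approximation} and Corollary \ref{norm 1}. Your explicit remark that one must verify $e^{-t\psi}$ admits the convolution representation $K\ast g$ with $|g|\leq M$ is a point the paper leaves implicit, but it does not change the argument.
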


\begin{proof}
Since for any $\rho _{\mathbf{a}}\in W_{\mathbf{a}}\left( \mathbb{R}%
^{n}\right) $ we have 
\begin{equation*}
\rho _{\mathbf{a}}\left( \cdot \right) =\sum_{\mathbf{m}\in \mathbb{Z}%
^{n}}\rho _{\mathbf{a}}\left( \pi \mathrm{A}\mathbf{m}^{\mathrm{T}}\right)
J_{\mathbf{m},\lambda _{a}}\left( \cdot \right) .
\end{equation*}%
then applying Lemma \ref{kernel approximation} and Corollary \ref{norm 1} we
get 
\begin{equation*}
\left\Vert e^{-t\psi \left( \mathbf{\cdot }\right) }-\sum_{\mathbf{m}\in 
\mathbb{Z}^{n}}\exp \left( -t\psi \left( \pi \mathrm{A}\mathbf{m}^{\mathrm{T}%
}\right) \right) J_{\mathbf{m},\lambda _{a}}\left( \cdot \right) \right\Vert
_{\infty }
\end{equation*}%
\begin{equation*}
=\left\Vert e^{-t\psi \left( \mathbf{\cdot }\right) }-\rho _{\mathbf{a}%
}\left( \cdot \right) +\rho _{\mathbf{a}}\left( \cdot \right) -\sum_{\mathbf{%
m}\in \mathbb{Z}^{n}}\exp \left( -t\psi \left( \pi \mathrm{A}\mathbf{m}^{%
\mathrm{T}}\right) \right) J_{\mathbf{m},\lambda _{a}}\left( \cdot \right)
\right\Vert _{\infty }
\end{equation*}%
\begin{equation*}
\leq E\left( \left( K\ast g\right) ,W_{\mathbf{a}}\left( \mathbb{R}%
^{n}\right) ,L_{\infty }\left( \mathbb{R}^{n}\right) \right)
\end{equation*}%
\begin{equation*}
+\left\Vert \rho _{\mathbf{a}}\left( \cdot \right) -\sum_{\mathbf{m}\in 
\mathbb{Z}^{n}}\exp \left( -t\psi \left( \pi \mathrm{A}\mathbf{m}^{\mathrm{T}%
}\right) \right) J_{\mathbf{m},\lambda _{a}}\left( \cdot \right) \right\Vert
_{\infty }
\end{equation*}%
\begin{equation*}
\leq E\left( \left( K\ast g\right) ,W_{\mathbf{a}}\left( \mathbb{R}%
^{n}\right) ,L_{\infty }\left( \mathbb{R}^{n}\right) \right) \left(
1+\left\Vert \mathbf{P}_{\lambda _{a}}\left\vert C\left( \mathbb{R}\right)
\rightarrow C\left( \mathbb{R}\right) \right. \right\Vert \right)
\end{equation*}%
\begin{equation*}
\leq 2\left( 1+2\times 2.834^{n}\right) Mn\dprod\limits_{k=1}^{n}\left( 
\frac{S_{a_{k}}}{2\delta _{k}}\right) \exp \left( -\min \left\{ a_{k}\delta
_{k}\left\vert 1\leq k\leq n\right. \right\} \right) .
\end{equation*}
\end{proof}

\begin{lemma}
\label{ext1} \bigskip \bigskip Assume that the characteristic function $\Phi
\left( \mathbf{x,}t\right) $ :$=\exp \left( -t\psi \left( \mathbf{x}\right)
\right) $ admits an analytic extension into the tube $\Omega \left( \mathbf{%
\delta }\right) $ and $\Phi \left( \mathbf{x}\right) \in L_{1}\left( \mathbb{%
R}^{n}\right) \cap L_{2}\left( \mathbb{R}^{n}\right) $ then for any $\mathbf{%
\alpha }\in \mathbb{R}^{n},\left\Vert \mathbf{\alpha }\right\Vert _{\infty
}<\left\Vert \mathbf{\delta }\right\Vert _{\infty }$ we have%
\begin{equation*}
p_{t}\left( \cdot \right) =\frac{1}{2}\left( 2\pi \right) ^{-n}\left( \cosh
\left( \left\langle \mathbf{\alpha },\cdot \right\rangle \right) \right)
^{-1}\int_{\mathbb{R}^{n}}\exp \left( i\left\langle \mathbf{z},\cdot
\right\rangle -t\left( \psi \left( \mathbf{z+}i\mathbf{\alpha }\right) +\psi
\left( \mathbf{z-}i\mathbf{\alpha }\right) \right) \right) d\mathbf{z.}
\end{equation*}
\end{lemma}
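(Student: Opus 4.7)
The plan is to recover $p_{t}$ from its Fourier inverse and shift the integration contour in each coordinate by $\pm i\alpha_{k}$, then average the two shifted identities to produce the $\cosh$ normalization appearing in the statement.

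Starting from
\begin{equation*}
p_{t}(\mathbf{x})=\frac{1}{(2\pi)^{n}}\int_{\mathbb{R}^{n}}
e^{i\langle \mathbf{x},\mathbf{y}\rangle }e^{-t\psi (\mathbf{y})}\,d\mathbf{y},
\end{equation*}
which is provided by Lemma \ref{q density} and Plancherel's theorem, I would invoke the hypothesized analytic extension of $\Phi (\mathbf{y},t)=\exp (-t\psi (\mathbf{y}))$ to the tube $\Omega (\boldsymbol{\delta })$ together with the $L_{1}\cap L_{2}$ assumption in order to replace the real contour $\mathbb{R}^{n}$ by $\mathbb{R}^{n}+i\boldsymbol{\alpha }$. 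Performing Cauchy's theorem one coordinate at a time on rectangles of height $\alpha_{k}$, and noting that the vertical sides vanish in the limit because $|\exp (-t\psi (\mathbf{y}))|$ is integrable on every horizontal slice of the tube, yields
\begin{equation*}
e^{\langle \mathbf{x},\boldsymbol{\alpha }\rangle }\,p_{t}(\mathbf{x})
=\frac{1}{(2\pi)^{n}}\int_{\mathbb{R}^{n}}
e^{i\langle \mathbf{x},\mathbf{z}\rangle }
e^{-t\psi (\mathbf{z}+i\boldsymbol{\alpha })}\,d\mathbf{z}.
\end{equation*}
The same argument with $-\boldsymbol{\alpha }$ in place of $\boldsymbol{\alpha }$ produces the analogous identity carrying $e^{-\langle \mathbf{x},\boldsymbol{\alpha }\rangle }$ on the left and $\psi (\mathbf{z}-i\boldsymbol{\alpha })$ inside the integrand. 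Adding the two and using $2\cosh (\langle \boldsymbol{\alpha },\mathbf{x}\rangle )=e^{\langle \boldsymbol{\alpha },\mathbf{x}\rangle }+e^{-\langle \boldsymbol{\alpha },\mathbf{x}\rangle }$ delivers the claimed formula after dividing through.

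The main obstacle is the rigorous justification of the multivariable contour shift. I would handle it by induction on the number of complexified coordinates: at step $k$, the inner integral over the remaining real variables defines, by Fubini and Morera's theorem, an analytic function of $y_{k}$ on $|\func{Im}y_{k}|<\delta_{k}$, so Cauchy's theorem applies coordinate-wise. The strict inequality $\|\boldsymbol{\alpha }\|_{\infty }<\|\boldsymbol{\delta }\|_{\infty }$ (read componentwise as $|\alpha_{k}|<\delta_{k}$) keeps the shifted contour strictly inside the tube of analyticity, and combined with the $L_{1}$-integrability of $\exp (-t\psi )$ on closed sub-tubes it forces the vertical tails of the rectangular Cauchy argument to vanish. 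Once the shift is legitimized, the algebraic step of collecting the two shifted representations into the $\cosh$-quotient is immediate.
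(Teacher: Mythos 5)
Your proposal is correct and follows essentially the same route as the paper's own proof: Fourier inversion of the characteristic function, a coordinate-wise Cauchy contour shift to $\mathbb{R}^{n}+i\mathbf{\alpha }$ and to $\mathbb{R}^{n}-i\mathbf{\alpha }$, and averaging the two shifted identities against $2\cosh \left( \left\langle \mathbf{\alpha },\cdot \right\rangle \right) $, with your extra care (Fubini/Morera for coordinate-wise analyticity, vanishing vertical sides, reading the hypothesis componentwise as $\left\vert \alpha _{k}\right\vert <\delta _{k}$) only strengthening what the paper leaves implicit. One remark: both your derivation and the paper's actually yield the integrand $e^{i\left\langle \mathbf{z},\cdot \right\rangle }\left( e^{-t\psi \left( \mathbf{z}+i\mathbf{\alpha }\right) }+e^{-t\psi \left( \mathbf{z}-i\mathbf{\alpha }\right) }\right) $, a sum of exponentials, so the displayed formula in the lemma, which instead exponentiates the sum $\psi \left( \mathbf{z}+i\mathbf{\alpha }\right) +\psi \left( \mathbf{z}-i\mathbf{\alpha }\right) $, is a typo in the statement rather than a gap in your argument.
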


\begin{proof}
Since the characteristic function $\Phi \left( \mathbf{x,}t\right) $ :$=\exp
\left( -t\psi \left( \mathbf{x}\right) \right) $ admits an analytic
extension into the tube $\Omega \left( \mathbf{\delta }\right) $ and $\Phi
\left( \mathbf{x}\right) \in L_{1}\left( \mathbb{R}^{n}\right) \cap
L_{2}\left( \mathbb{R}^{n}\right) $ then%
\begin{equation*}
\lim_{\left\langle \mathbf{x,x}\right\rangle \rightarrow \infty }\Phi \left( 
\mathbf{x}\right) =0
\end{equation*}%
and applying Caychy's theorem we get 
\begin{equation*}
p_{t}\left( \mathbf{\cdot }\right) =\left( 2\pi \right) ^{-n}\int_{\mathbb{R}%
^{n}}\exp \left( i\left\langle \mathbf{x},\cdot \right\rangle -t\psi \left( 
\mathbf{x}\right) \right) d\mathbf{x}
\end{equation*}%
\begin{equation*}
=\left( 2\pi \right) ^{-n}\int_{\mathbb{R}^{n}+i\mathbf{\alpha }}\exp \left(
i\left\langle \mathbf{x},\cdot \right\rangle -t\psi \left( \mathbf{x}\right)
\right) d\mathbf{x}
\end{equation*}%
\begin{equation*}
=\left( 2\pi \right) ^{-n}\int_{\mathbb{R}^{n}}\exp \left( i\left\langle 
\mathbf{z+}i\mathbf{\alpha },\cdot \right\rangle -t\psi \left( \mathbf{z+}i%
\mathbf{\alpha }\right) \right) d\mathbf{z}
\end{equation*}%
\begin{equation*}
=\left( 2\pi \right) ^{-n}\exp \left( -\left\langle \mathbf{\alpha },\cdot
\right\rangle \right) \int_{\mathbb{R}^{n}}\exp \left( i\left\langle \mathbf{%
z},\cdot \right\rangle -t\psi \left( \mathbf{z+}i\mathbf{\alpha }\right)
\right) d\mathbf{z,}
\end{equation*}%
where $\left\Vert \mathbf{\alpha }\right\Vert _{\infty }<\left\Vert \mathbf{%
\delta }\right\Vert _{\infty }$. Hence%
\begin{equation*}
p_{t}\left( \mathbf{\cdot }\right) \exp \left( -\left\langle \mathbf{\alpha }%
,\cdot \right\rangle \right) =\left( 2\pi \right) ^{-n}\int_{\mathbb{R}%
^{n}}\exp \left( i\left\langle \mathbf{z},\cdot \right\rangle -t\psi \left( 
\mathbf{z+}i\mathbf{\alpha }\right) \right) d\mathbf{z.}
\end{equation*}%
Similarly, 
\begin{equation*}
p_{t}\left( \mathbf{\cdot }\right) \exp \left( \left\langle \mathbf{\alpha }%
,\cdot \right\rangle \right) =\left( 2\pi \right) ^{-n}\int_{\mathbb{R}%
^{n}}\exp \left( i\left\langle \mathbf{z},\cdot \right\rangle -t\psi \left( 
\mathbf{z-}i\mathbf{\alpha }\right) \right) d\mathbf{z.}
\end{equation*}%
It means that 
\begin{equation*}
p_{t}\left( \cdot \right) =\frac{1}{2}\left( 2\pi \right) ^{-n}\left( \cosh
\left( \left\langle \mathbf{\alpha },\cdot \right\rangle \right) \right)
^{-1}\int_{\mathbb{R}^{n}}\exp \left( i\left\langle \mathbf{z},\cdot
\right\rangle -t\left( \psi \left( \mathbf{z+}i\mathbf{\alpha }\right) +\psi
\left( \mathbf{z-}i\mathbf{\alpha }\right) \right) \right) d\mathbf{z.}
\end{equation*}
\end{proof}

\begin{lemma}
Assume that the characteristic function $\Phi \left( \mathbf{x,}T\right) $ :$%
=\exp \left( -T\psi \left( \mathbf{x}\right) \right) \in L_{1}\left( \mathbb{%
R}^{n}\right) \cap L_{2}\left( \mathbb{R}^{n}\right) $ and admits an
analytic extension into the tube $\Omega \left( \mathbf{\delta }\right) $.
Let 
\begin{equation*}
p_{T,\mathbf{a}}^{\ast }\left( \mathbf{\cdot }\right) :=\left( 2\pi \right)
^{-n}\int_{\mathbb{R}^{n}}\exp \left( i\left\langle \mathbf{z},\cdot
\right\rangle \right) g_{\mathbf{a}}\left( \mathbf{z,}T\right) d\mathbf{z,}
\end{equation*}%
where $g_{\mathbf{a}}\left( \mathbf{z,}T\right) \in W_{\mathbf{a}}\left( 
\mathbb{R}^{n}\right) $ interpolates 
\begin{equation*}
\Xi \left( \mathbf{\cdot ,}T\right) :=\exp \left( -t\left( \psi \left( 
\mathbf{z+}i\mathbf{\alpha }\right) +\psi \left( \mathbf{z-}i\mathbf{\alpha }%
\right) \right) \right)
\end{equation*}%
at the points $\mathbf{z}_{m}=\mathrm{A}\mathbf{m}^{\mathrm{T}},\mathbf{m}%
\in \mathbb{Z}^{n}$. Then 
\begin{equation*}
\left\Vert p_{T}\left( \mathbf{\cdot }\right) -p_{T,\mathbf{a}}^{\ast
}\left( \mathbf{\cdot }\right) \right\Vert _{1}
\end{equation*}%
\begin{equation*}
\leq \left( 2\pi \right) ^{-n}\dprod\limits_{k=1}^{n}\alpha _{k}^{-1}\mathrm{%
Vol}\left( A\right) \left\Vert \Xi \left( \mathbf{z,}T\right) -g_{\mathbf{a}%
}\left( \mathbf{z,}T\right) \right\Vert _{\infty }+\varepsilon ,
\end{equation*}%
where $A\subset \mathbb{R}^{n}$ is such that 
\begin{equation*}
\left( 2\pi \right) ^{-n}\dprod\limits_{k=1}^{n}\alpha _{k}^{-1}\left( \int_{%
\mathbb{R}^{n}\setminus A}\left\vert \Xi \left( \mathbf{z,}T\right)
\right\vert d\mathbf{z+}\int_{\mathbb{R}^{n}\setminus A}\sum_{\mathbf{m}\in 
\mathbb{Z}^{n}}\left\vert \Xi \left( \pi \mathrm{A}\mathbf{m,}T\right)
\right\vert +\int_{\mathbb{R}^{n}\setminus A}\left\vert J_{\mathbf{m}%
,\lambda _{\mathbf{a}}}\left( \mathbf{x}\right) \right\vert d\mathbf{x}%
\right) \mathbf{\leq }\varepsilon
\end{equation*}%
and, in our notations, 
\begin{equation*}
\left\Vert \Xi \left( \mathbf{z,}T\right) -g_{\mathbf{a}}\left( \mathbf{z,}%
T\right) \right\Vert _{\infty }
\end{equation*}%
\begin{equation*}
\leq 2\left( 1+2\times 2.834^{n}\right) Mn\dprod\limits_{k=1}^{n}\left( 
\frac{S_{a_{k}}}{2\left( \delta _{k}-\alpha _{k}\right) }\right) \exp \left(
-\min \left\{ a_{k}\left( \delta _{k}-\alpha _{k}\right) \left\vert 1\leq
k\leq n\right. \right\} \right)
\end{equation*}
\end{lemma}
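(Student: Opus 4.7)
The plan is to proceed in three stages, running the argument in parallel to the proof of Lemma \ref{uniform approximation} but in the $L_{1}$ norm and after the analytic shift supplied by Lemma \ref{ext1}. First I would invoke Lemma \ref{ext1} to represent
\begin{equation*}
p_{T}(\mathbf{\cdot})=\tfrac{1}{2}(2\pi)^{-n}\bigl(\cosh\langle\mathbf{\alpha},\cdot\rangle\bigr)^{-1}\int_{\mathbb{R}^{n}}e^{i\langle\mathbf{z},\cdot\rangle}\,\Xi(\mathbf{z},T)\,d\mathbf{z},
\end{equation*}
so that $p_{T}-p_{T,\mathbf{a}}^{\ast}$ becomes the damped inverse Fourier transform of $\Xi-g_{\mathbf{a}}$. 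The damping factor is integrable in $\mathbf{x}$ with $\int_{\mathbb{R}^{n}}\bigl(\cosh\langle\mathbf{\alpha},\mathbf{x}\rangle\bigr)^{-1}d\mathbf{x}\asymp\prod_{k}\alpha_{k}^{-1}$, which is the origin of the $\prod_{k}\alpha_{k}^{-1}$ prefactor in the bound.

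Next I would split the inner $\mathbf{z}$-integral according to the truncation set $A$. On $A$ the integrand is at most $\|\Xi(\cdot,T)-g_{\mathbf{a}}(\cdot,T)\|_{\infty}$, and integration over $A$ contributes $\mathrm{Vol}(A)\cdot\|\Xi-g_{\mathbf{a}}\|_{\infty}$ to the pointwise bound on $|p_{T}(\mathbf{x})-p_{T,\mathbf{a}}^{\ast}(\mathbf{x})|$. On $\mathbb{R}^{n}\setminus A$ I would apply the triangle inequality, inserting the interpolation formula $g_{\mathbf{a}}=\sum_{\mathbf{m}\in\mathbb{Z}^{n}}\Xi(\pi\mathrm{A}\mathbf{m}^{\mathrm{T}},T)\,J_{\mathbf{m},\lambda_{\mathbf{a}}}$ from Theorem \ref{t1}, and reduce the tail to the three pieces explicitly listed in the hypothesis on $A$, yielding the additive error $\varepsilon$. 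Taking $L_{1}$ norms in $\mathbf{x}$ and absorbing the damping integral yields the first displayed inequality.

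Finally, for the explicit bound on $\|\Xi(\cdot,T)-g_{\mathbf{a}}(\cdot,T)\|_{\infty}$, I would reinterpret $\Xi(\mathbf{z},T)=\exp(-t\widetilde{\psi}(\mathbf{z}))$ with $\widetilde{\psi}(\mathbf{z}):=\psi(\mathbf{z}+i\mathbf{\alpha})+\psi(\mathbf{z}-i\mathbf{\alpha})$ as a characteristic exponent. Since $\psi$ is analytic on $\Omega(\mathbf{\delta})$, the shifted exponent $\widetilde{\psi}$ is analytic on the narrower tube $\Omega(\mathbf{\delta}-\mathbf{\alpha})$, so Lemma \ref{uniform approximation} applies verbatim with $\delta_{k}$ replaced by $\delta_{k}-\alpha_{k}$. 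The stated bound then reproduces the Lemma \ref{uniform approximation} estimate with the adjusted strip width.

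The main obstacle is the second stage: the $L_{\infty}$ control of $\Xi-g_{\mathbf{a}}$ only governs the behaviour on $A$, and it is delicate to arrange that the tail contributions on $\mathbb{R}^{n}\setminus A$ are packaged exactly into the three integrals appearing in the defining condition for $A$, especially coupling the decay of $\Xi$ with that of the reproducing kernels $J_{\mathbf{m},\lambda_{\mathbf{a}}}$. A secondary technical point is justifying the verification of the hypotheses of Lemma \ref{uniform approximation} for $\widetilde{\psi}$ in the tube of width $\mathbf{\delta}-\mathbf{\alpha}$, which requires $\|\mathbf{\alpha}\|_{\infty}<\|\mathbf{\delta}\|_{\infty}$ (as assumed in Lemma \ref{ext1}) together with the $L_{1}\cap L_{2}$ bound on $\Phi$ to ensure the shifted characteristic function still decays and remains integrable.
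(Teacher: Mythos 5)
Your proposal follows the paper's own proof essentially step for step: the representation of $p_{T}-p_{T,\mathbf{a}}^{\ast}$ via Lemma \ref{ext1}, extraction of the $\prod_{k}\alpha_{k}^{-1}$ factor from the $\cosh$ damping, the split of the $\mathbf{z}$-integral over $A$ and its complement with the tail packaged into the three integrals defining $A$ (using the interpolation expansion of $g_{\mathbf{a}}$), and the final appeal to Lemma \ref{uniform approximation} with $\delta_{k}$ replaced by $\delta_{k}-\alpha_{k}$. There is no substantive difference from the paper's argument.
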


\begin{proof}
Applying Lemma \ref{ext1} we get 
\begin{equation*}
\left\Vert p_{T}\left( \mathbf{\cdot }\right) -p_{T,\mathbf{a}}^{\ast
}\left( \mathbf{\cdot }\right) \right\Vert _{1}
\end{equation*}%
\begin{equation*}
=\frac{1}{2}\left( 2\pi \right) ^{-n}\left\Vert \frac{1}{\cosh \left(
\left\langle \mathbf{\alpha },\cdot \right\rangle \right) }\int_{\mathbb{R}%
^{n}}\exp \left( i\left\langle \mathbf{z},\cdot \right\rangle \right) \left(
\Xi \left( \mathbf{z,}T\right) -g_{\mathbf{a}}\left( \mathbf{z,}T,A\right)
\right) d\mathbf{z}\right\Vert _{1}
\end{equation*}%
\begin{equation*}
\leq \frac{1}{2}\left( 2\pi \right) ^{-n}\left\Vert \frac{1}{\cosh \left(
\left\langle \mathbf{\alpha },\cdot \right\rangle \right) }\int_{\mathbb{R}%
^{n}}\left\vert \Xi \left( \mathbf{z,}T\right) -g_{\mathbf{a}}\left( \mathbf{%
z,}T,A\right) \right\vert d\mathbf{z}\right\Vert _{1}
\end{equation*}%
\begin{equation*}
\leq \frac{1}{2}\left( 2\pi \right) ^{-n}\dprod\limits_{k=1}^{n}\alpha
_{k}^{-1}\int_{\mathbb{R}^{n}}\left\vert \Xi \left( \mathbf{z,}T\right) -g_{%
\mathbf{a}}\left( \mathbf{z,}T,A\right) \right\vert d\mathbf{z}
\end{equation*}%
\begin{equation*}
=\frac{1}{2}\left( 2\pi \right) ^{-n}\dprod\limits_{k=1}^{n}\alpha
_{k}^{-1}\left( \int_{A}+\int_{\mathbb{R}^{n}\setminus A}\right) \left\vert
\Xi \left( \mathbf{z,}T\right) -g_{\mathbf{a}}\left( \mathbf{z,}T\right)
\right\vert d\mathbf{z}
\end{equation*}%
\begin{equation*}
\leq \frac{1}{2}\left( 2\pi \right) ^{-n}\dprod\limits_{k=1}^{n}\alpha
_{k}^{-1}\mathrm{Vol}\left( A\right) \left\Vert \Xi \left( \mathbf{z,}%
T\right) -g_{\mathbf{a}}\left( \mathbf{z,}T\right) \right\Vert _{\infty }
\end{equation*}%
\begin{equation*}
+\frac{1}{2}\left( 2\pi \right) ^{-n}\dprod\limits_{k=1}^{n}\alpha
_{k}^{-1}\left( \int_{\mathbb{R}^{n}\setminus A}\left\vert \Xi \left( 
\mathbf{z,}T\right) \right\vert d\mathbf{z+}\int_{\mathbb{R}^{n}\setminus
A}\left\vert g_{\mathbf{a}}\left( \mathbf{z,}T\right) \right\vert d\mathbf{z}%
\right) \mathbf{,}
\end{equation*}%
where $C$ is some absolute constant. Since $\Phi \left( \mathbf{x,}T\right)
\in L_{1}\left( \mathbb{R}^{n}\right) \cap L_{2}\left( \mathbb{R}^{n}\right) 
$ then $\Xi \left( \mathbf{x,}T\right) \in L_{1}\left( \mathbb{R}^{n}\right)
\cap L_{2}\left( \mathbb{R}^{n}\right) $. Hence for any $\varepsilon >0$
there is such $A:=A_{\varepsilon }$ that 
\begin{equation*}
\int_{\mathbb{R}^{n}\setminus A_{\varepsilon }}\left\vert \Xi \left( \mathbf{%
z,}T\right) \right\vert d\mathbf{z\leq }\varepsilon
\end{equation*}%
and%
\begin{equation*}
\int_{\mathbb{R}^{n}\setminus A_{\varepsilon }}\left\vert g_{\mathbf{a}%
}\left( \mathbf{z,}T\right) \right\vert d\mathbf{z\leq }\sum_{\mathbf{m}\in 
\mathbb{Z}^{n}}\left\vert \Xi \left( \pi \mathrm{A}\mathbf{m,}T\right)
\right\vert \int_{\mathbb{R}^{n}\setminus A_{\varepsilon }}\left\vert J_{%
\mathbf{m},\lambda _{\mathbf{a}}}\left( \mathbf{x}\right) \right\vert d%
\mathbf{x.}
\end{equation*}

Finally, we apply Lemma \ref{uniform approximation}.
\end{proof}

Observe that a similar estimate can be obtained in the case $\left\Vert
p_{T}\left( \mathbf{\cdot }\right) -p_{T,\mathbf{a}}^{\ast }\left( \mathbf{%
\cdot }\right) \right\Vert _{\infty }.$ Finally, examples 1-3 show that the
function $\Xi \left( \pi \mathrm{A}\cdot \mathbf{,}T\right) $ is rapidly
decaying. Consequently, we can effectively truncate the Fourier series in
the representation of $p_{T,\mathbf{a}}^{\ast }\left( \mathbf{\cdot }\right) 
$ to reduce significantly the number of point evaluation.

\section{\protect\bigskip Appendix. characteristic exponents and density
functions}

\label{repr p}

\bigskip

\bigskip Let $\left( \Omega ,\mathcal{F},\mathrm{P}\right) $be a probability
space. Let $\mathcal{B}\left( \mathbb{R}^{n}\right) $be the collection of
all \textit{Borel sets} on $\mathbb{R}^{n}$which is the $\sigma -$algebra
generated by all open sets in $\mathbb{R}^{n},$i.e., the smallest $\sigma -$%
algebra that contains all open sets in $\mathbb{R}^{n}.$A real valued
function is called \textit{measurable} (Borel measurable) if it is $\mathcal{%
B}\left( \mathbb{R}^{n}\right) $measurable. A mapping $\mathbf{X:}$$\Omega $$%
\longrightarrow \mathbb{R}^{n}$is an $\mathbb{R}^{n}-$valued \textit{random
variable} if it is $\mathcal{F}$-measurable, i.e., for any $B\in \mathcal{B}%
\left( \mathbb{R}^{n}\right) $we have $\left\{ \omega |\mathbf{X}(\omega
)\in B\right\} \in \mathcal{F}.$A \textit{stochastic process} $\mathbf{X}=\{%
\mathbf{X}_{t}\}_{t\in \mathbb{R}_{+}}$is a one-parametric family of random
variables on a common probability space $\left( \Omega ,\mathcal{F},\mathrm{P%
}\right) $. The\textit{\ trajectory} of the process $\mathbf{X}$ is a map 
\begin{equation*}
\begin{array}{ccc}
\mathbb{R}_{+} & \longrightarrow & \mathbb{R}^{n} \\ 
t & \longmapsto & \mathbf{X}_{t}\left( \omega \right) ,%
\end{array}%
\end{equation*}%
where $\omega \in \Omega $ and $\mathbf{X}_{t}=\left( X_{1t},\cdot \cdot
\cdot ,X_{nt}\right) $.

$\mathbf{X}=\{\mathbf{X}_{t}\}_{t\in \mathbb{R}_{+}}$ is called a \textit{L%
\'{e}vy process} (process with stationary independent increments) if

\begin{enumerate}
\item The random variables $\mathbf{X}_{t_{0}},\mathbf{X}_{t_{1}}-\mathbf{X}%
_{t_{0}},\cdots ,\mathbf{X}_{t_{m}}-\mathbf{X}_{t_{m-1}}$, for any $0\leq
t_{0}<t_{1}<\cdots <t_{m}$ and $m\in \mathbb{N}$ are independent (\textit{%
independent increment property}).

\item $\mathbf{X}_{0}=\mathbf{0}$ a.s.

\item The distribution of $\mathbf{X}_{t+\tau }-\mathbf{X}_{t}$ is
independent of $\tau $ (\textit{temporal homogeneity or stationary
increments property}).

\item It is \textit{stochastically continuous}, i.e. 
\begin{equation*}
\lim_{\tau \rightarrow t}\mathrm{P}\left[ |\mathbf{X}_{\tau }-\mathbf{X}%
_{t}|>\epsilon \right] =0
\end{equation*}%
for any $\epsilon >0$ and $t\geq 0$.

\item There is $\Omega _{0}\in \mathcal{F}$ with $\mathrm{P}\left( \Omega
_{0}\right) =1$ such that, for any $\omega \in \Omega _{0},$ $\mathbf{X}%
_{t}\left( \omega \right) $ is right-continuous on $\mathbb{[}0,\infty )$
and has left limits on $\mathbb{(}0,\infty ).$
\end{enumerate}

A process satisfying ($1-4$) is called a \textit{L\'{e}vy process in law. An
additive process }is a stochastic process which satisfies ($1,2,4,5$) and an%
\textit{\ additive process in law} satisfies ($1,2,4$).

The \textit{convolution} $\mu =\mu _{1}\ast \mu _{2}$ of two distributions $%
\mu _{1}$ and $\mu _{2}$ on $\mathbb{R}^{n}$ is defined as 
\begin{equation*}
\mu \left( B\right) =\int_{\mathbb{R}^{n}\mathbb{\times R}^{n}}\chi _{B}(%
\mathbf{x}+\mathbf{y})\mu _{1}\left( d\mathbf{x}\right) \mu _{2}\left( d%
\mathbf{y}\right) <\infty ,
\end{equation*}%
where 
\begin{equation*}
\chi _{B}(\mathbf{x}):=\left\{ 
\begin{array}{cc}
1, & \mathbf{x\in }B\mathbf{,} \\ 
0, & \mathbf{x\notin }B%
\end{array}%
\right.
\end{equation*}%
is the \textit{characteristic function} of a Borel (Lebesgue) measurable set 
$B\subset \mathbb{R}^{n}$. A probability measure $\mu $ is called \textit{%
infinitely divisible} if for any $m\in \mathbb{N}$ there is a probability
measure $\mu _{\left( m\right) }$ such that 
\begin{equation*}
\mu =\underbrace{\mu _{\left( m\right) }\ast \cdot \cdot \cdot \ast \mu
_{\left( m\right) }}_{m}.
\end{equation*}%
Consider the set $\mathcal{L\ }$of L\'{e}vy process $\mathbf{X}=\{\mathbf{X}%
_{t}\}_{t\in \mathbb{R}_{+}}$ on a probability space $(\Omega ,\mathcal{F},%
\mathrm{P})$. For a finite measure $\mu $ on $\mathbb{R}^{n}$ (i.e., if $\mu
\left( \mathbb{R}^{n}\right) <\infty $) we define its Fourier transform as 
\begin{equation*}
\widehat{\mu }\left( \mathbf{y}\right) =\mathbf{F}\mu \left( \mathbf{y}%
\right) =\int_{\mathbb{R}^{n}}\exp \left( -i\left\langle \mathbf{x},\mathbf{y%
}\right\rangle \right) \mu \left( d\mathbf{x}\right)
\end{equation*}%
and its formal inverse%
\begin{equation*}
\mu \left( d\mathbf{x}\right) =\mathbf{F}^{-1}\widehat{\mu }\left( d\mathbf{x%
}\right) =\frac{1}{\left( 2\pi \right) ^{n}}\int_{\mathbb{R}^{n}}\exp \left(
i\left\langle \mathbf{x},\mathbf{y}\right\rangle \right) \widehat{\mu }%
\left( \mathbf{y}\right) d\mathbf{y}.
\end{equation*}%
It is known that if $\mu $ is infinitely divisible then there exists a
unique continuous function $\phi :\mathbb{R}^{n}\mathbb{\rightarrow C}$ such
that $\phi \left( \mathbf{0}\right) =0$ and $e^{\phi \left( \mathbf{y}%
\right) }=\widehat{\mu }\left( \mathbf{y}\right) .$ Hence, the \textit{%
characteristic function} of the distribution of $\mathbf{X}_{t}$ of any L%
\'{e}vy process can be represented in the form 
\begin{equation*}
\mathbb{E}\left[ \exp \left( \left\langle i\mathbf{x,X}_{t}\right\rangle
\right) \right] =e^{-t\psi \left( \mathbf{x}\right) },
\end{equation*}%
where $\mathbf{x}\in \mathbb{R}^{n}$, $t\in \mathbb{R}_{+}$ and the function 
$\psi \left( \mathbf{x}\right) $ \ is uniquely determined. This function is
called the \textit{characteristic exponent}. Vice versa, a L\'{e}vy process $%
\mathbf{X}=\{\mathbf{X}_{t}\}_{t\in \mathbb{R}_{+}}$ is determined uniquely
by its characteristic exponent $\psi \left( \mathbf{x}\right) $. In
particular, density function $p_{t}$ can be expressed as 
\begin{equation}
p_{t}\left( \cdot \right) =\frac{1}{\left( 2\pi \right) ^{n}}\int_{\mathbb{R}%
^{n}}\exp \left( i\left\langle \cdot \mathbf{,x}\right\rangle -t\psi \left( 
\mathbf{x}\right) \right) =\mathbf{F}^{-1}\left( \exp \left( -t\psi \left( 
\mathbf{x}\right) \right) \right) \left( \cdot \right) .  \label{density 1}
\end{equation}

We say that a matrix $A$ is \textit{nonnegative-definite} (or
positive-semidefinite) if $\left\langle \mathbf{x}^{\ast },A\mathbf{x}%
\right\rangle \geq 0$ for all $\mathbf{x}\in \mathbb{C}^{n}$ (or for all $%
\mathbf{x}\in \mathbb{R}$ for the real matrix), where $\mathbf{x}^{\ast }$
is the conjugate transpose.

The key role in our analysis plays the following classical result known as
the L\'{e}vy-Khintchine formula which gives a representation of
characteristic functions of all infinitely divisible distributions.

\begin{theorem}
Let $\mathbf{X}=\{\mathbf{X}_{t}\}_{t\in \mathbb{R}_{+}}$ be a L\'{e}vy
process on $\mathbb{R}^{n} $. Then its characteristic exponent admits the
representation 
\begin{equation*}
\psi (\mathbf{y} )=-\frac{1}{2} \left\langle A\mathbf{y}, \mathbf{y}
\right\rangle -i \langle\mathbf{b}, \mathbf{y} \rangle -\int_{\mathbb{R}^{n}
}\left( 1-e^{i \langle \mathbf{y}, \mathbf{x}\rangle}+i\langle \mathbf{y}, 
\mathbf{x}\rangle \chi _{D}(\mathbf{x})\right) \Pi (d\mathbf{x}),
\end{equation*}
where $\chi _{D}(\mathbf{x})$ is the characteristic function of $D:=\{%
\mathbf{x} \in \mathbb{R}^{n},\,\,|\mathbf{x}| \leq 1\}$, $A$ is a symmetric
nonnegative-definite $n \times n$ matrix, $\mathbf{b} \in \mathbb{R}^{n}$
and $\Pi (d\mathbf{x})$ is a measure on $\mathbb{R}^{n}$ such that 
\begin{equation}  \label{ppp}
\int_{\mathbb{R}^{n}}\min \{1,\langle\mathbf{x}, \mathbf{x}\rangle\}\Pi (%
\mathbf{x})<\infty, \,\,\Pi(\{\mathbf{0}\})=0.
\end{equation}
Hence $\widehat{\mu }\left( \mathbf{y}\right)=e^{\psi(\mathbf{y})}$.
\end{theorem}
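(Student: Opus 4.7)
My plan is to follow the classical route via infinite divisibility, compound Poisson approximation, and a passage to the limit, which is standard but requires care in handling the compensator term $i\langle\mathbf{y},\mathbf{x}\rangle\chi_{D}(\mathbf{x})$.

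\textbf{Step 1: Infinite divisibility of $\mathbf{X}_{t}$.} For any fixed $t>0$ and $m\in\mathbb{N}$, write
\begin{equation*}
\mathbf{X}_{t}=\sum_{k=1}^{m}\bigl(\mathbf{X}_{kt/m}-\mathbf{X}_{(k-1)t/m}\bigr).
\end{equation*}
The stationary and independent increments properties from the definition of a L\'{e}vy process imply that the summands are i.i.d.\ with common law $\mu_{(m)}$, so the law $\mu_{t}$ of $\mathbf{X}_{t}$ is infinitely divisible. In particular, $\widehat{\mu}_{t}=(\widehat{\mu}_{(m)})^{m}$ for every $m$.

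\textbf{Step 2: Logarithm and the semigroup property.} I would first show $\widehat{\mu}_{t}(\mathbf{y})\neq 0$ for all $\mathbf{y}\in\mathbb{R}^{n}$: if $\widehat{\mu}_{t}(\mathbf{y}_{0})=0$ at some point, then $|\widehat{\mu}_{(m)}(\mathbf{y}_{0})|=0$ for all $m$, contradicting $\widehat{\mu}_{(m)}(\mathbf{0})=1$ and continuity as $m\to\infty$ (since $\widehat{\mu}_{(m)}\to 1$ locally uniformly). Hence there is a unique continuous branch $\phi_{t}:\mathbb{R}^{n}\to\mathbb{C}$ with $\phi_{t}(\mathbf{0})=0$ and $\widehat{\mu}_{t}=e^{\phi_{t}}$. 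Stochastic continuity combined with independent increments gives $\phi_{t+s}=\phi_{t}+\phi_{s}$ and continuity in $t$, so $\phi_{t}=-t\psi$ for a single function $\psi$ with $\psi(\mathbf{0})=0$; this matches the definition of the characteristic exponent already given in the appendix.

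\textbf{Step 3: Compound Poisson approximation.} The infinite divisibility relation $\widehat{\mu}_{t}=(\widehat{\mu}_{t/m})^{m}$ yields $-t\psi(\mathbf{y})=m\log\widehat{\mu}_{t/m}(\mathbf{y})$. For large $m$, $\widehat{\mu}_{t/m}(\mathbf{y})=1+\varepsilon_{m}(\mathbf{y})$ with $\varepsilon_{m}\to 0$ uniformly on compacts, so
\begin{equation*}
-t\psi(\mathbf{y})=\lim_{m\to\infty}m\bigl(\widehat{\mu}_{t/m}(\mathbf{y})-1\bigr)=\lim_{m\to\infty}\int_{\mathbb{R}^{n}}\bigl(e^{i\langle\mathbf{y},\mathbf{x}\rangle}-1\bigr)\,m\,\mu_{t/m}(d\mathbf{x}).
\end{equation*}
To make the integrand integrable near the origin, I rewrite it as
\begin{equation*}
\bigl(e^{i\langle\mathbf{y},\mathbf{x}\rangle}-1-i\langle\mathbf{y},\mathbf{x}\rangle\chi_{D}(\mathbf{x})\bigr)+i\langle\mathbf{y},\mathbf{x}\rangle\chi_{D}(\mathbf{x}),
\end{equation*}
absorbing the linear term into a drift. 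The first bracket is $O(|\mathbf{x}|^{2})$ near $\mathbf{0}$ and bounded outside $D$, which is precisely the condition matched by \eqref{ppp}.

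\textbf{Step 4: Extracting $\Pi$, $A$, and $\mathbf{b}$.} Let $\nu_{m}(d\mathbf{x}):=\min\{1,\langle\mathbf{x},\mathbf{x}\rangle\}\,m\,\mu_{t/m}(d\mathbf{x})$. Using a tightness argument (the total masses $\nu_{m}(\mathbb{R}^{n})$ are bounded uniformly in $m$ by testing against a suitable bounded function of $\mathbf{y}$ and taking real parts), pass to a weakly convergent subsequence with limit $\nu$, and define
\begin{equation*}
\Pi(d\mathbf{x}):=\frac{\nu(d\mathbf{x})}{\min\{1,\langle\mathbf{x},\mathbf{x}\rangle\}}\quad\text{on }\mathbb{R}^{n}\setminus\{\mathbf{0}\},\qquad \Pi(\{\mathbf{0}\})=0.
\end{equation*}
The residual contribution of $\nu$ at the origin, being quadratic in $\mathbf{y}$, produces the Gaussian form $-\tfrac{1}{2}\langle A\mathbf{y},\mathbf{y}\rangle$ with $A$ symmetric nonnegative-definite (symmetry and nonnegativity follow from the fact that $\psi$ arises from a characteristic function), while the linear compensator term yields the drift $-i\langle\mathbf{b},\mathbf{y}\rangle$. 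Uniqueness of the triple $(A,\mathbf{b},\Pi)$ follows from the uniqueness of $\psi$.

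\textbf{Main obstacle.} The crux of the argument, and the part most likely to require careful handling, is Step~4: proving tightness of $\{\nu_{m}\}$ and justifying the splitting into Gaussian, drift, and jump parts in a way that makes the integrability condition \eqref{ppp} come out of the proof rather than being assumed. The need for the cut-off $\chi_{D}$ is exactly what makes this nontrivial: without compensation the integral over small jumps may diverge, and the compensator shifts mass into the drift $\mathbf{b}$ in a cut-off-dependent way. One must verify that after this redistribution the limiting object $\Pi$ is a genuine Borel measure on $\mathbb{R}^{n}\setminus\{\mathbf{0}\}$ satisfying \eqref{ppp}.
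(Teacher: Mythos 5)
The paper does not prove this theorem: it is quoted in the appendix as ``the following classical result known as the L\'{e}vy--Khintchine formula,'' with the reader referred to the standard references (Sato, Applebaum, Gihman--Skorohod) for the theory. So there is no in-paper argument to compare against; what you have written is an outline of the classical proof found in those sources (infinite divisibility of $\mu_{t}$, existence of a continuous logarithm, the compound-Poisson/accompanying-laws approximation $m(\widehat{\mu}_{t/m}-1)\rightarrow -t\psi$, and extraction of the generating triple by a tightness argument). That is the right route, and your identification of Step~4 as the crux is accurate.

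As a proof, however, your proposal is a plan rather than an argument, and two points deserve flagging. First, in Step~2 the non-vanishing of $\widehat{\mu}_{t}$ is not obtained by ``continuity as $m\rightarrow\infty$'' of $\widehat{\mu}_{(m)}$ alone; the standard argument applies L\'{e}vy's continuity theorem to $|\widehat{\mu}_{t}|^{2/m}$, which converges pointwise to the indicator of $\{\widehat{\mu}_{t}\neq 0\}$, forcing that set to be all of $\mathbb{R}^{n}$ — without this the subsequent limit $(\widehat{\mu}_{(m)})^{m}$ argument is circular. Second, in Step~4 the claim that the total masses $\nu_{m}(\mathbb{R}^{n})$ are uniformly bounded ``by testing against a suitable bounded function of $\mathbf{y}$ and taking real parts'' is precisely the estimate that carries all the difficulty: one must integrate $\mathrm{Re}\,(1-\widehat{\mu}_{t/m}(\mathbf{y}))$ over a cube in $\mathbf{y}$ to control $\int \min\{1,\langle\mathbf{x},\mathbf{x}\rangle\}\,m\,\mu_{t/m}(d\mathbf{x})$ uniformly in $m$, and then separately show that the mass of $\nu$ concentrating at the origin produces a quadratic form (the symmetry and nonnegative-definiteness of $A$ do follow, as you say, from $\psi$ being a limit of characteristic exponents, but the identification of the quadratic limit requires a second-order Taylor argument you have not supplied). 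Neither point is a wrong turn — they are the standard lemmas — but as written the proposal would not compile into a complete proof without them.
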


The density of $\Pi $ is known as the \textit{L\'{e}vy density} and $A$ is
the \textit{covariance matrix}. In particular, if $A=0$ (i.e. $%
A=(a_{j,k})_{1\leq j,k\leq n}$, $a_{j,k}=0$) then the L\'{e}vy process is a 
\textit{pure non-Gaussian process} and if $\Pi =0$ the process is \textit{%
Gaussian}.

We say that the L\'{e}vy process has\textit{\ bounded variation} if its
sample paths have bounded variation on every compact time interval. A L\'{e}%
vy process has bounded variation iff $A=0$ and 
\begin{equation*}
\int_{\mathbb{R}^{n}}\min \left\{ \left\langle \mathbf{x,x}\right\rangle
,1\right\} \Pi \left( d\mathbf{x}\right) <\infty ,\,\,\Pi \left( \left\{ 
\mathbf{0}\right\} \right) =0,
\end{equation*}%
(see, e.g., \cite{bertoin}, p.15).

The systematic exposition of the theory of L\'{e}vy processes can be found
in \cite{gs1}, \cite{gs2}, \cite{gs3}, \cite{sato}, \cite{Applebaum}, \cite%
{McKean}.




\begin{thebibliography}{99}
\bibitem{bern} Akhiezer, N. I., Lectures on approximation theory, Nayka,
Moskow, 1965.

\bibitem{Applebaum} Applebaum, D., L\'evy processes and Stochastic Calculus,
Cambridge University Press, 2009.









\bibitem{bertoin} Bertoin, J., L\'{e}vy processes. Cambridge University
Press, Cambridge.


\bibitem{bp1} Bouchaud, J-P, Potters, M., Theory of financial risk,
Cambridge University press, Cambridge, 2000.

\bibitem{bl1} Boyarchenko, S. I., Levendorskii, S. Z., Non-gaussian
Merton-Black-Scholes theory, World Scientific, Advanced Series on
Statistical Science \& Applied Probability, Vol. 9, 2002.


\bibitem{Carr and Madan} Carr, P., Madan, Option valuation using the fast
Fourier transform, J. Comput. Finance, 1999, \textbf{2}, 61--73. 

\bibitem{carmona} Carmmona, R., Durrleman, V., Pricing and hedging spread
options, SIAM Review, 2003, \textbf{45}, 627-685.


\bibitem{Dempster and Hong} Dempster, M. A. H., Hong, S. S. G., Spread
option valuation and the fast Fourier transform, in Mathematical Finance,
Bachelier Congress 2000, Springer, Berlin, 2002, 203--220.

\bibitem{Deng} Deng, S., Stochastic models of energy commodity prices and
their applications: mean reversion with jumps and spikes. Working paper,
Georgia Institute of Technology, October 1999.

\bibitem{plancherel} Dias, 0., Fourier Transform and Plancherel Theorem,
Preprint, University of Texas at Austin, September 27, 2000, 1-34,
www.ma.utexas.edu/users/odiaz/notas/fourier.pdf



\bibitem{Duan} Duan, J. C., Pliska, S. R., Option valuation with
co-integrated asset prices. Working paper, Department of Finance, Hong Kong
University of Science and Technology, January 1999.


\bibitem{Duffy} Duffy, D. J. Finite difference methods in financial
engineering. Wiley Finance Series. John Wiley $\&$ Sons Ltd., Chichester. A
partial differential equation approach, 2006. 

\bibitem{gs1} Gihman, I. I., Skorohod, A. V., The Theory of Stochastic
Processes I, Springer-Verlag, 1974,

\bibitem{gs2} Gihman, I. I., Skorohod, A. V., The Theory of Stochastic
Processes II, Springer-Verlag, 1975.

\bibitem{gs3} Gihman, I. I., Skorohod, A. V., The Theory of Stochastic
Processes III, 1979.

\bibitem{goldberg} Goldberg, R. R., Fourier Transform. Academic Press,
London and New York, 1961. 

\bibitem{f1} Fama, E. F., The behaviour of stock market prices, Journ. of
Business, 1965, \textbf{38}, 34-105.


\bibitem{hurd} Hurd, T. R., Zhou, Z., A Fourier transform method for spread
option, arXiv:0902.3643v1 [q-fin.CP] 20 Feb 2009. 


\bibitem{Kirk} Kirk, E., Correlation in the energy markets, In \textit{%
Managing Energy Price Risk} (First edition). London: Risk Publications and
Enron, 1995, 71-78.



\bibitem{Korobov} Korobov, N. M., Trigonometric sums and their applications,
Moskov, Nauka, 1989.

\bibitem{Kuipers} Kuipers, L., Niderreiter, H., Uniform distribution of
sequences, Wiley-Interscience, New York, 1974.

\bibitem{Kushpel-Levesley} Gomes, S. M., Kushpel, A., Levesley, J., Ragozin
D. L., Interpolation on the torus using $sk$-splines with number theoretic
knots, J. of Approximation Theory, 1999, \textbf{98}, 56-71.




\bibitem{Li Deng Zhou} Li, M., Deng, S., Zhou, J., Closed-form
approximations for spread option prices and Greeks, J. Derivatives, 2008, 
\textbf{15}, 58-80.

\bibitem{Lord} Lord, R., Fang, F., Bervoets, F., Oosterlee, C. W., A fast
and accurate FFT-based method for pricing early-exercise options under
L\'evy processes, SIAM J. Sci. Comput., 2008, \textbf{30}, 1678-1705. 



\bibitem{m3} Madan, D. B., Seneta, E., The VG model for share market
returns, Journal of Business, 1990, \textbf{63}, 511-524.










\bibitem{man1} Mandelbrot, B. B., The variation of certain speculative
prices, Jorn. of Business, 1963, \textbf{36}, 394-419.


\bibitem{Margrabe} Margrabe, W. The value of an option to exchange one asset
for another, J. Finance, 1978, \textbf{33}, 177--186.


\bibitem{ms1} Mantegna, R. N., Stanley, H. E., Stochastic process with
ultraslow convergence to a gaussian: the truncated L\'{e}vy flight, Phys.
Rev. Lett. 1994, \textbf{73}, 946-2949.

\bibitem{ms2} Mantegna, R. N., Stanley, H. E., An introduction to
Econophysics. Correlation and complexity in Finance, Cambridge University
Press, Cambridge, 2000.

\bibitem{Marks} Marks, R. J., Advanced topics in Schannon sampling and
interpolation theory, Springer-Verlag, 1993. 

\bibitem{Mbanefo} Mbanefo, A. Co-movement term structure and the valuation
of crack energy spread options. In \textit{Mathematics of Derivatives
Securities}. M. A. H. Dempster and S. R. Pliska, eds. Cambridge University
Press, 89-102, 1997.

\bibitem{McKean} McKean, H. P. Stochastic integrals, The Rockefeller
University, Academic Press, New York, 1969. 

\bibitem{Jitse Niesen} Niesen, J., Wright, W. M., A Krylov subspace method
for option pricing, 2011, http://ssm.com/abstract=1799124.




\bibitem{Pilipovic} Pilipovic, D., Wengler, J., Basis for boptions. Energy
and Power risk Management, December 1998, 28-29.


\bibitem{sato} Sato, K., L\'{e}vy Processes and Infinitely Divisible
Distributions, Cambridge University Press, 1999.


\bibitem{sha1} Shannon, C. E., A Mathematical Theory of Communication, The
Bell System Technical Journal, \textbf{27}, 3, 1948, 379-423.

\bibitem{sha2} Shannon, C. E., Weaver, W., A Mathematical Theory of
Communication, Univ of Illinois Press, 1949.

\bibitem{shannon} Shannon, C. E., Papers on information theory and
cybernetics, Mir, Moscow 1963.

\bibitem{Shimko} Shimko, D. C., Options on futures spreads: hedging,
speculation, and valuation. \textit{The Journal of Futures Markets}, \textbf{%
14}, 2, 183-213.



\bibitem{Tavella} Tavella, D., Randall, C., Pricing financial instruments.
John Wiley $\&$ Sons Ltd., Chichester, 2000.



\bibitem{timan} Timan, A. F., Approximation theory of functions of real
variable, Gos. Izd., Moskov, 1960. 

\bibitem{Wilmott} Wilmott, P. Paul Wilmott on quantitative finance, Second
ed. John Wiley $\&$ Sons Ltd., Chichester, 2006. 

\end{thebibliography}
\end{document}